\newcommand{\ud}{\mathrm{d}}
\newcommand{\Ud}{\mathrm{D}}
\newcommand{\norm}[1]{\left\lVert#1\right\rVert}
\newcommand{\abs}[1]{\left\lvert#1\right\rvert}
\newcommand{\bm}[1]{\mathbf{#1}}
\newcommand{\R}[1]{\ifnumequal{#1}{1}{\mathbb{R}}{\mathbb{R}^{#1}}}
\newcommand{\Dt}{\frac{\ud}{\ud t}}
\newtheorem{theorem}{Theorem}
\newtheorem{lemma}{Lemma}[section]
\newtheorem{corollary}[lemma]{Corollary}
\theoremstyle{definition}
\newtheorem{definition}[lemma]{Definition}
\DeclareMathOperator{\Div}{div}
\def\rot{{\rm rot}\,}
\numberwithin{equation}{section}
\title[Axially symmetric solutions to NSE]{Global regular axially-symmetric solutions to the Navier-Stokes equations with small swirl}
\begin{document}

\author[B. Nowakowski]{Bernard Nowakowski}
\address{Military University of Technology\\Cybernetics Faculty\\Institute of Mathematics and Cryptology\\Warsaw\\Poland}
\email{bernard.nowakowski@wat.edu.pl}

\author[W.M. Zajaczkowski]{Wojciech M. Zajaczkowski}
\address{Military University of Technology\\Cybernetics Faculty\\Institute of Mathematics and Cryptology\\Warsaw\\Poland}
\address{Polish Academy of Sciences\\Institute of Mathematics\\Warsaw\\Poland}
\email{wz@impan.pl}

\keywords{Navier-Stokes equations, axially symmetric solutions, small swirl, weighted estimate for the stream function}

\begin{abstract}
    Axially symmetric solutions to the Navier-Stokes equations in a bounded cylinder are considered. On the boundary the normal component of the velocity and he angular components of the velocity and vorticity are assumed to vanish. If the norm of the initial swirl is sufficiently small, then the regularity of axially symmetric, weak solutions is shown. The key tool is a new estimate for the stream function in certain weighted Sobolev spaces.
\end{abstract}

\maketitle

\section{Introduction}\label{s1}

In this work we consider axially-symmetric solutions to the Navier-Stokes equations in bounded cylindrical domains $\Omega\subset\R3$ with the boundary $S:=\partial \Omega$.

To describe the problem we  transform the Cartesian coordinates $x=(x_1,x_2,x_3)$ into cylindrical coordinates by the relation
\begin{equation*}
    x_1 = r\cos\varphi,\quad x_2=r\sin\varphi,\quad x_3=z.
\end{equation*}
This relation determines the orthonormal basis $(\bar e_r, \bar e_\varphi, \bar e_z)$, where
\begin{equation*}
    \bar e_r=(\cos\varphi,\sin\varphi,0),\quad \bar e_\varphi=(-\sin\varphi,\cos\varphi,0),\quad \bar e_z=(0,0,1)
\end{equation*}
are unit vectors along the radial-, the angular-, and the $z$-axes, respectively.

Using this orthonormal basis we can decompose the velocity vector $\bm v$ as follows
\begin{equation*}
    \bm v = v_r(r,z,t)\bar e_r + v_\varphi(r,z,t)\bar e_\varphi + v_z(r,z,t)\bar e_z
\end{equation*}
For the vorticity vector $\boldsymbol \omega=\rot \bm v$ we have the expression
\begin{equation*}
    \boldsymbol \omega=-v_{\varphi,z}(r,z,t)\bar e_r+\omega_\varphi(r,z,t)\bar e_\varphi+\frac{1}{r}(rv_\varphi)_{,r}(r,z,t)\bar e_z.
\end{equation*}
Here $\omega_\varphi$ can be computed explicitly, i.e. $\omega_\varphi=v_{r,z}-v_{z,r}$.

Let $R, a > 0$. Then, we define
\[
    \Omega=\{x\in\R3\colon r<R,\ |z|<a\}
\]
and by $\partial\Omega = S_1\cup S_2$ we denote the boundary of $\Omega$, where
\[
    \begin{aligned}
        S_1 &= \left\{x\in\R3\colon r=R,\ |z|<a\right\},\\
        S_2 &= \left\{x\in\R3\colon r<R,\ z\in\{-a,a\}\right\}.
    \end{aligned}
\]
The system of equations we investigate reads
\begin{equation}\label{1.5}
    \left\{
    \begin{aligned}
        &\bm v_t + (\bm v\cdot\nabla)\bm  v - \nu\Delta \bm v + \nabla p = \bm f & &\text{in $\Omega^T = \Omega\times(0,T)$},\\
        &\Div \bm v = 0 & &\text{in $\Omega^T$},\\
        &\bm v\cdot\bar n = 0 & &\text{on $S^T = S\times(0,T)$},\\
        &\omega_\varphi=0 & &\text{on $S^T$},\\
        &v_\varphi=0 & &\text{on $S^T$},\\
        &\bm v\big\vert_{t=0} = \bm v_0 & &\text{in $\Omega\times \{t = 0\}$}
    \end{aligned}
    \right.
\end{equation}
where $\bar n$ is the unit outward normal to $S$ vector.

To present our main result we need to introduce the quantity
\begin{equation}
    u=rv_\varphi.
\label{1.13}
\end{equation}
It is called \emph{the swirl} and is a solution to the problem
\begin{equation}
    \left\{
    \begin{aligned}
        &u_{,t} + \bm v\cdot\nabla u-\nu\Delta u+{2\nu\over r}u_{,r}=rf_\varphi\equiv f_0 & &\text{in $\Omega$}, \\
        &u = 0 & &\text{on $S^T$},\\
        &u\big\vert_{t=0} = rv_\varphi(0)\equiv u(0) & &\text{in $\Omega \times \{t = 0\}$}.
    \end{aligned}
    \right.
\label{1.14}
\end{equation}

We have to emphasize that the boundary conditions \eqref{1.5}$_{3,4}$ were introduced by O.A. Lady\v zhenskaya in \cite{Ladyzenskaja:1968aa}. Condition \eqref{1.5}$_4$ is necessary for solvability of some initial-boundary value problems for $\omega_\varphi$ (see \eqref{1.16}$_2$).
\begin{theorem}[Main result]\label{t1}
    Fix $0 < r_0 < R$. Let
    \[
        \begin{aligned}
            D_1^2 &\equiv 3\norm{\bm f}_{L_{1(0,t;L_2(\Omega))}(\Omega^t)}^2 + 2\norm{\bm v(0)}_{L_2(\Omega)}^2 < \infty,\\
            D_2 &\equiv \norm{f_0}_{L_1(0,t;L_{\infty}(\Omega))}+\norm{u(0)}_{L_\infty(\Omega)} < \infty.
        \end{aligned}
    \]
    Let us introduce
    \begin{multline}\label{1.21}
        M(t) = c\left(\norm{\frac{f_\varphi}{r}}_{L_2(0,t;L_{\frac{6}{5}}(\Omega))}^2 + \norm{f_{\varphi}}_{L_4(\Omega^t)}^4 + \norm{\frac{\omega_\varphi(0)}{r}}_{L_2(\Omega)}^2 \right. \\
    \left. + \int_\Omega{v_\varphi^4(0)\over r^2}\ud x\right) + c{D_1^{10}D_2^8\over r_0^{16}} \equiv M'(t)+c{D_1^{10}D_2^8\over r_0^{16}}.
    \end{multline}

    Let
    \[
        \begin{aligned}
            \alpha(t,r_0) &= \norm{u}_{L_{\infty}(\Omega_{r_0}^t)}^2, \qquad \text{where $\Omega_{r_0}=\{x\in\Omega\colon r\le r_0\}$},\\
            M &= M(T),\\
            M' &= M'(T).
        \end{aligned}
    \]
    Assume that $\gamma > 1$ and $\alpha(t, r_0)$ is so small that
    \begin{multline*}
        \alpha(t,r_0) \leq c(\gamma - 1)M \\
        \cdot \left(\gamma M D_2^2 + D_1^2 (\gamma M)^2 + (\gamma M)^2\exp\left(c(\gamma M)^2\right)\left(\norm{\frac{v_\varphi(0)}{r}}_{L_3(\Omega)}^2 + \norm{\frac{f_\varphi}{r}}_{L_1(0,t;L_3(\Omega))}^2\right)\right)^{-1} \\
        \equiv \Phi(M).
    \end{multline*}
    Then
    \begin{equation}\label{eq1.25n}
        \norm{\frac{\omega_\varphi}{r}}_{L_{\infty}(0,t;L_2(\Omega))}^2 + \norm{\frac{\omega_\varphi}{r}}_{L_2(0,t;H^1(\Omega))}^2 \leq \gamma M.
    \end{equation}

    Consider now the case $r_0=R$, thus $\Omega_R=\Omega$. Suppose that
    \[
        \alpha(t,R)\le\norm{f_0}_{L_1(0,t;L_{\infty}(\Omega))} + \norm{u(0)}_{L_{\infty}(\Omega)} \equiv \Phi(M').
    \]
    Then
    \begin{equation}\label{1.25}
        \norm{\frac{\omega_\varphi}{r}}_{L_{\infty}(0,t;L_2(\Omega))}^2 + \norm{\frac{\omega_\varphi}{r}}_{L_2(0,t;H^1(\Omega))}^2 \leq \gamma M'.
    \end{equation}
\end{theorem}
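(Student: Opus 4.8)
The plan is to establish \eqref{eq1.25n} as an a priori estimate through a continuation (bootstrap) argument on the functional
\[
    X(t) = \norm{\tfrac{\omega_\varphi}{r}}_{L_{\infty}(0,t;L_2(\Omega))}^2 + \norm{\tfrac{\omega_\varphi}{r}}_{L_2(0,t;H^1(\Omega))}^2,
\]
whose bound $\gamma M$ is closed precisely by the smallness of $\alpha(t,r_0)$. Writing $\eta = \omega_\varphi/r$ and $\chi = v_\varphi/r$, the two driving equations are the one for $\eta$ (this is \eqref{1.16}$_2$),
\[
    \eta_{,t} + \bm v\cdot\nabla\eta - \nu\Bigl(\Delta + \tfrac2r\partial_r\Bigr)\eta = \partial_z(\chi^2),
\]
since $\tfrac1{r^4}\partial_z(u^2) = \partial_z(u^2/r^4) = \partial_z(\chi^2)$ with $u = rv_\varphi$, together with the equation for $\chi$,
\[
    \chi_{,t} + \bm v\cdot\nabla\chi + \tfrac{2v_r}{r}\chi - \nu\Bigl(\Delta + \tfrac2r\partial_r\Bigr)\chi = \tfrac{f_\varphi}{r}.
\]
First I would run the basic $L_2$ energy estimate for $\eta$: testing with $\eta$, the convective term drops by $\Div\bm v = 0$ and the boundary conditions, while the diffusion term is coercive because $\omega_\varphi = 0$ on $S$ forces $\eta = 0$ on $\partial\Omega$, yielding $\nu\norm{\nabla\eta}_{L_2}^2$ plus a nonnegative axis contribution. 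After integrating by parts in $z$ the source becomes $-\int\chi^2\partial_z\eta\,\ud x$, so that $\abs{\int\chi^2\partial_z\eta\,\ud x}\le\norm{\chi}_{L_4(\Omega)}^2\norm{\partial_z\eta}_{L_2(\Omega)}$ and, after Young's inequality and absorption, one is left with controlling $\int_0^t\norm{\chi}_{L_4(\Omega)}^4\,\ud t'$. The initial-data terms of $M'(t)$, namely $\norm{\omega_\varphi(0)/r}_{L_2}^2$ and $\int v_\varphi^4(0)/r^2\,\ud x$, appear here as the initial values of $\eta$ and $\chi$, while $\norm{f_\varphi/r}_{L_2(0,t;L_{6/5})}^2$ and $\norm{f_\varphi}_{L_4}^4$ arise from the right-hand sides of the corresponding equations.

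The heart of the argument is the bound for $\int_0^t\norm{\chi}_{L_4}^4$, which is \emph{not} self-contained: the $\chi$-equation carries the zeroth-order term $\tfrac{2v_r}{r}\chi$, coupling $\chi$ to the poloidal velocity and hence, through the stream function, back to $\eta$. This is where the swirl enters. I would test the $\chi$-equation with $\chi^{3}$ (and, for the $L_3$-type control appearing in the smallness condition, with $\abs{\chi}\chi$), so that the dangerous term is $\int\tfrac{v_r}{r}\abs{\chi}^{p}\,\ud x$. Splitting $\Omega = \Omega_{r_0}\cup(\Omega\setminus\Omega_{r_0})$: on $\Omega\setminus\Omega_{r_0}$ the weight is harmless, $1/r\le 1/r_0$, and $v_r$ is controlled by the energy bound $D_1$ and, through the maximum principle for the swirl ($\norm{u}_{L_\infty}\le D_2$, cf. \eqref{1.14}), one produces exactly the explicit contribution $c\,D_1^{10}D_2^{8}/r_0^{16}$ in $M$; on the near-axis region $\Omega_{r_0}$, where $1/r$ is singular, the factor $u^2\le\alpha(t,r_0)$ supplies the smallness that tames the weight. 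Recovering $v_r/r$ (and the poloidal velocity generally) from $\omega_\varphi$ in these weighted norms is precisely the new weighted stream-function estimate advertised in the abstract, and I expect this to be the main obstacle: one must quantify the poloidal velocity near the axis in weighted Sobolev spaces sharply enough that the near-axis source is bounded by $\alpha$ times a controlled power of $X$.

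With these two estimates in hand, the bootstrap closes as follows. A Gronwall argument on the $\chi$-estimate, whose coefficient is $\norm{v_r/r}$ (dominated by $\sqrt{X}\le\sqrt{\gamma M}$), produces the factor $\exp(c(\gamma M)^2)$ multiplying the $L_3$ data $\norm{v_\varphi(0)/r}_{L_3}^2 + \norm{f_\varphi/r}_{L_1(0,t;L_3)}^2$; substituting this into the $\eta$-estimate gives a closed inequality of the form $X(t)\le M + c\,\alpha(t,r_0)\,g(\gamma M)$, where $g(\gamma M)$ is exactly the bracketed expression in the smallness hypothesis $\alpha\le\Phi(M)$. Assuming $X\le\gamma M$ on a maximal subinterval and invoking $\alpha\le c(\gamma-1)M/g(\gamma M)$ forces $X\le M + (\gamma-1)M = \gamma M$, so the bound cannot first be violated; a standard continuity argument then propagates \eqref{eq1.25n} up to $t = T$. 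Finally, the case $r_0 = R$ is the degenerate one in which $\Omega_{r_0} = \Omega$: there is no far-from-axis region, the explicit term $cD_1^{10}D_2^8/r_0^{16}$ is absent so $M$ reduces to $M'$, the whole source is controlled through the swirl maximum principle via the global smallness $\alpha(t,R)\le D_2$, and one obtains \eqref{1.25} with $\gamma M'$ in place of $\gamma M$ by the same continuation.
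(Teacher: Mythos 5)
Your high-level architecture does match the paper's: an $L_2$ energy estimate for $\omega_1=\omega_\varphi/r$ forced by $\int_{\Omega^t}\abs{v_\varphi/r}^4\,\ud x\ud t'$, a second quartic estimate whose dangerous term reduces via $v_r/r=-\psi_{1,z}$ to the stream function, the splitting of that term over $\Omega_{r_0}$ and $\bar\Omega_{r_0}$ (with $1/r\le 1/r_0$ plus the swirl maximum principle producing $cD_1^{10}D_2^8/r_0^{16}$ away from the axis, and the smallness of $\alpha(t,r_0)$ near it), and an iteration closing $X\le\gamma M$. But there is a genuine gap at the decisive step, and your setup makes it harder than the paper's. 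The paper does \emph{not} test the equation for $\chi=v_\varphi/r$ with $\chi^3$; it tests the $v_\varphi$-equation \eqref{1.6}$_1$ with $v_\varphi^3/r^2$ (Lemma \ref{l4.1}). That choice still yields $\frac{\nu}{2}\int_{\Omega^t}\abs{v_\varphi/r}^4\,\ud x\ud t'$ on the left -- exactly what the $\omega_1$-estimate needs -- but its dangerous term is $\int_{\Omega^t}\frac{v_r}{r}\frac{v_\varphi^4}{r^2}\,\ud x\ud t'$, whereas yours is $\int_{\Omega^t}\frac{v_r}{r}\frac{v_\varphi^4}{r^4}\,\ud x\ud t'$, two powers of $r$ more singular. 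This is not cosmetic: the near-axis bound hinges on the weighted stream-function estimate of Corollary \ref{cor2}, which, after the Kondratev-type decomposition $\psi_1=(\psi_1-\psi_1(0)-\eta)+\psi_1(0)+\eta$, controls $\int\abs{\psi_1-\psi_1(0)-\eta}^2r^{-6}\,\ud x$ by $\norm{\omega_1}_{H^1(\Omega)}^2$. The weight $r^{-6}$ pairs exactly with $v_\varphi^4/r^2=u^4/r^6$, so that $\sup_{\Omega_{r_0}^t}\abs{u}^4\le\alpha^2$ factors out; with your version one would need an $r^{-8}$ weighted estimate, which is not available from $\omega_1\in H^1(\Omega)$. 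The presence of $\int_\Omega v_\varphi^4(0)/r^2\,\ud x$ (not $/r^4$) in $M'$ is another sign that the correct functional is $\int_\Omega v_\varphi^4/r^2\,\ud x$.

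Beyond that, the step you yourself flag as ``the main obstacle'' -- quantifying the stream function near the axis sharply enough that the near-axis source is $\alpha$ times a controlled power of $X$ -- is the actual content of the proof and is left blank. In the paper this is Lemma \ref{l5.2}: the piece $\psi_1(0)=\psi_1\vert_{r=0}$ is handled through $\abs{\psi_1}_{\infty,\Omega}\le c\norm{\psi_1}_{2,\Omega}\le c\abs{\omega_1}_{2,\Omega}$ (Lemma \ref{lem2.11}) combined with the $L_3$-bound on $u_1$ from Lemma \ref{lem2.9n} -- the Gronwall there has coefficient $\abs{\psi_1}_{\infty,\Omega}^2\lesssim\abs{\omega_1}_{2,\Omega}^2$, which is where $\exp\left(c(\gamma M)^2\right)\left(\norm{v_\varphi(0)/r}_{L_3(\Omega)}^2+\norm{f_\varphi/r}_{L_1(0,t;L_3(\Omega))}^2\right)$ originates -- while the $\eta$ piece requires a separate Hardy-inequality computation. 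Without these, the closed inequality $X\le M+c\,\alpha\,g(\gamma M)$ is asserted rather than derived. Your continuity argument for closing the bound and your reading of the $r_0=R$ case are consistent with the paper (which uses successive approximations instead), but they rest on the unproven, and as currently set up unprovable, near-axis estimate.
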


One may wonder what is the difference between \eqref{eq1.25n} and \eqref{1.25}. Careful comparison shows that \eqref{eq1.25n} is obtained provided that $\alpha(t,r_0) = \norm{u}^2_{L_\infty(\Omega_{r_0}^t)}$ is sufficiently small in the neighborhood of $r = 0$. In \eqref{1.25} we do not need any smallness restrictions. This might suggest that we can take $r_0 = R$ and without any restrictions show the regularity of weak, axially symmetric solutions with non-vanishing $v_\varphi(0)$. Unsurprisingly, this is not true: \eqref{1.25} does not exist without obtaining \eqref{eq1.25n} first. We will see later in the proof that we approach certain integral differently when $r$ is close to $0$ and when $0< r_0 < r$, where $r_0$ is fixed. Unfortunately, as \eqref{1.21} shows, passing with $r_0 \to 0^+$ is not possible.

We should emphasize that Theorem \ref{t1} does not directly imply the regularity of weak solutions but we may quickly deduce it following the reasoning from Lemma \ref{lem2.9n}. Instead, we utilize one of many Serrin-type regularity criteria, e.g. \cite[Theorem 3.(ii)]{Chae:2002vv}, which states that if $\omega_\varphi \in L_\infty(0,t;L_2(\Omega))$, then a weak solution $\bm v$ to \eqref{1.5} is regular.
Inequality \eqref{1.25} yields exactly
\begin{equation*}
    \norm{ \omega_\varphi}_{L_{\infty}(0,t;L_2(\Omega))} \leq cM',
\label{1.26}
\end{equation*}
which for $\bm v'=(v_r,v_z)$ yields
\begin{equation}
    \norm{\bm v'}_{L_\infty(0,t;H^1(\Omega))}\leq cM',
\label{1.27}
\end{equation}
and eventually
\begin{equation}
    \norm{v_r}_{L_\infty(0,t;L_6(\Omega))} + \norm{v_z}_{L_\infty(0,t;L_6(\Omega))} \le cM'.
    \label{1.29}
\end{equation}
In light of \cite[Theorem 1]{Neustupa:2000wu} the above inequality also implies the regularity of a weak solution $\bm v$ to \eqref{1.5}. In fact, there are many auxiliary results that could be utilized here. For a brief summary of Serrin-type regularity criteria for axially symmetric solutions to the Navier-Stokes equations we refer the reader to the introductions in e.g. \cite{KPZ}, \cite{Chen:2017we} and \cite{Rencawowicz:2019uy}. Lots of regularity criteria in terms of angular component of the velocity or of the swirl were established in e.g. \cite{CFZ}, \cite{Ly}, \cite{W}, \cite{LZ}, \cite{KP}, \cite{NP}, \cite{ZZ}.

%Setting $s = 4$ in \eqref{5.8} and using the Poincar\'e inequality we obtain
%\begin{multline}
%    \Dt \norm{v_\varphi}_{L_4(\Omega)}^4 + \norm{\nabla v_\varphi^2}_{L_2(\Omega)}^2 + \norm{v_\varphi}_{L_{12}(\Omega)}^4 \\
%    \leq c\norm{\frac{\psi}{r}}_{L_\infty(\Omega)}^2\norm{v_\varphi}_{L_4(\Omega)}^4 + c\norm{\frac{f}{r}}_{L_4(\Omega)}\norm{\frac{v_\varphi}{r}}_{L_{12}(\Omega)}^3,
%\label{1.28}
%\end{multline}
%where the last term on the right-hand side we estimate by using the Young inequality. Integrating \eqref{1.28} with respect to time yields
%\begin{multline}
%    \norm{v_{\varphi}(t)}^4_{L_4(\Omega)} + \norm{\nabla v_\varphi^2}^2_{L_2(\Omega^t)} + \norm{v_{\varphi}}^4_{L_4(0,t;L_{12}(\Omega))} \\
%    \leq \exp\left(c \norm{\frac{\psi}{r}}^2_{L_2(0,t;L_\infty(\Omega))}\right)\left(\norm{\frac{f}{r}}^4_{L_4(\Omega^t)} + \norm{v_\varphi(0)}^4_{L_4(\Omega)}\right).
%\end{multline}
%Estimate \eqref{1.27} gives
%Since $\norm{\frac{\psi}{r}}_{L_2(0,t;L_\infty(\Omega))} \leq c \norm{\frac{\omega_\varphi}{r}}_{L_2(\Omega^t)}$ we obtain from the above inequality
%\begin{multline}
%    \norm{v_\varphi(t)}^4_{L_4(\Omega)} + \norm{\nabla v_\varphi^2}^2_{L_2(\Omega^t)} + \norm{v_\varphi}^4_{L_4(0,t;L_{12}(\Omega))} \\
%    \leq c \exp\left(cM'\right) \left(\norm{\frac{f}{r}}^4_{L_4(\Omega^t)} + \norm{v_\varphi(0)}^4_{L_4(\Omega)}\right).
%\end{multline}
%Therefore, $v_\varphi\in L_4(0,t;L_{12}(\Omega))$, $v_r,v_z \in L_\infty(0,t;L_6(\Omega))$, which is much more than the ordinary Serrin condition requires for weak, axially symmetric solutions to be regular.

In general, the problem of regularity of weak solutions to the Navier-Stokes equations in $\R3$ has a long history. In 1968 it was shown independently by Ladyzhenskaya \cite{Ladyzenskaja:1968aa} and Ukhovskii et al. \cite{Ukhovskii:1968aa} that in class of axially symmetric solutions any weak solution is regular provided that $v_\varphi(0) = 0$. Shortly after Ladyzhenskaya wrote a book (\cite{L1}) which laid foundations for intensive research on regularity of weak solutions.

Before describing the steps of the proof of Theorem \ref{t1} let us briefly discuss recent results. In \cite{Lei:2017um} the case $\Omega = \R3$ is studied. Lei et al. show that if $\sup_{t \geq 0} \abs{u(r,z,t)} \sim O\left(\ln^{-2} r\right)$ (see Corollary 1.3), then $\bm v$ is global and regular axially symmetric solution to \eqref{1.5}$_{1,2,6}$. This is an improvement over Wei's result (see \cite{Wei:2016tn}), where $O\left(\ln^{-\frac{3}{2}} r\right)$ is needed. These two results were recently improved in \cite{Chen:2022un}, where the condition
\[
    \abs{u(r,z,t)} \leq N e^{-c\abs{\ln r}^\tau}
\]
implies the regularity of weak solutions. Here $0 < r \leq \frac{1}{4}$ and $\tau$ is any number from $(0,1)$, $c,N$ are some constants. Our result is somehow comparable -- \eqref{1.21} suggests that $\abs{u(r,z,t)} \sim e^{-\frac{1}{r^{16}}}$.

We have to emphasize that in papers \cite{Ly}, \cite{LZ}, \cite{ZZ} smallness condition looks very complicated and depends not only on the swirl but also on e.g. vorticity. In \cite{Zajaczkowski:2010wb} to prove the regularity of weak, axially symmetric solutions we assume either $v_r \in L_\infty(0,t;L_3(\Omega))$ or $\frac{v_r}{r} \in L_\infty(0,t; L_{\frac{3}{2}}(\Omega))$. In both cases some smallness conditions are needed but they depend explicitly on the constant from the Poincar\'e inequality.

To the best of our knowledge that are not that many results concerning the regularity of weak, axially symmetric solutions to the Navier-Stokes equations in bounded cylinders (see e.g. \cite{Zajaczkowski:2007tu}). Our main result is not only new but it also uses non-trivial weighted estimates for the stream functions. To explain this technique, we go back to \eqref{1.5} and following e.g. Ladyzhenskaya \cite{Ladyzenskaja:1968aa} or How et al. (see \cite{HL}) we rewrite it in the form
\begin{equation}\label{1.6}
    \left\{
        \begin{aligned}
            &v_{\varphi,t} + \bm v\cdot\nabla v_\varphi-\nu\bigg(\Delta-\frac{1}{r^2}\bigg)v_\varphi+\frac{1}{r}v_rv_\varphi=f_\varphi & &\text{in $\Omega^T$}, \\
            &\begin{split}\omega_{\varphi,t} + \bm v\cdot\nabla\omega_\varphi-\nu\left(\Delta-\frac{1}{r^2}\right)\omega_\varphi+\frac{1}{r}\left(v_\varphi^2\right)_{,z} \\
                + \frac{1}{r}v_r\omega_\varphi=F_\varphi\end{split} & &\text{in $\Omega^T$}, \\
            &-\left(\Delta-\frac{1}{r^2}\right)\psi=\omega_\varphi & &\text{in $\Omega^T$}, \\
             &v_\varphi = \omega_\varphi = \psi = 0 & &\text{on $S^T$}, \\
             &v_\varphi\big\vert_{t=0}=v_\varphi(0) & &\text{in $\Omega\times \{t = 0\}$}, \\
             &\omega_\varphi\big\vert_{t=0}=\omega_\varphi(0) & &\text{in $\Omega\times \{t = 0\}$}, \\
        \end{aligned}
    \right.
\end{equation}
where $F_\varphi=\rot \bm f\cdot\bar e_\varphi$ and $\psi$ is the stream function such that
\begin{equation}
    v_r=-\psi_{,z},\quad v_z=\frac{1}{r}(r\psi)_{,r}.
\label{1.9}
\end{equation}
We recall that in \eqref{1.6} and whenever cylindrical coordinates in this manuscript are used we have
\begin{equation}
    \nabla = \bar e_r \partial_r + \bar e_z \partial_z \qquad \text{and} \qquad \Delta=\partial_r^2+\frac{1}{r}\partial_r+\partial_z^2.
\label{1.8}
\end{equation}
To derive energy type estimates for he velocity we prefer \eqref{1.5}$_{1,2}$ in the form
\begin{equation}\label{1.11}
    \begin{aligned}
        &v_{r,t} + \bm v\cdot\nabla v_r - \nu\left(\Delta v_r - \frac{1}{r^2}v_r\right) - \frac{1}{r}v_\varphi^2 + p_{,r} = f_r,\\
        &v_{\varphi,t} + \bm v\cdot\nabla v_\varphi - \nu\left(\Delta v_\varphi - \frac{1}{r^2}v_\varphi\right) + \frac{1}{r}v_rv_\varphi = f_\varphi,\\
        &v_{z,t} + \bm v\cdot\nabla v_z - \nu\Delta v_z + p_{,z} = f_z,\\
        &(rv_r)_{,r} + (rv_z)_{,z} = 0,
    \end{aligned}
\end{equation}
Moreover, we have the following boundary
\begin{equation}\label{1.10}
    v_r\big\vert_{S_1} = 0, \quad v_z\big\vert_{S_2} = 0,\quad v_\varphi\big\vert_S=0, \quad v_{r,z}-v_{z,r}\big\vert_S=0,
\end{equation}
and initial conditions
\begin{equation*}
    v_r\big\vert_{t=0} = v_r(0),\qquad v_\varphi\big\vert_{t=0}=v_\varphi(0),\qquad v_z\big\vert_{t=0}=v_z(0)
\end{equation*}

It is also convenient to introduce the quantities
\begin{equation}
    u_1 = \frac{v_{\varphi}}{r},\quad \omega_1= \frac{\omega_{\varphi}}{r},\quad \psi_1 = \frac{\psi}{r},\quad f_1 = \frac{f_{\varphi}}{r},\quad F_1 = \frac{F_{\varphi}}{r}.
\label{1.15}
\end{equation}
Then, system \eqref{1.6} finally reads
\begin{equation}\label{1.16}
    \left\{
        \begin{aligned}
            &u_{1,t} + \bm v\cdot\nabla u_1 - \nu\left(\Delta u_1+\frac{2}{r}u_{1,r}\right) = 2u_1\psi_{1,z} + f_1 & &\text{in $\Omega^T$},\\
            &\omega_{1,t} + \bm v\cdot\nabla\omega_1-\nu\bigg(\Delta\omega_1+\frac{2}{r}\omega_{1,r}\bigg)=2u_1u_{1,z}+F_1 & &\text{on $\Omega^T$}, \\
            &-\Delta\psi_1-\frac{2}{r}\psi_{1,r}=\omega_1 & &\text{in $\Omega^T$}, \\
            &u_1 = \omega_1 = \psi_1 = 0 & &\text{on $S^T$}, \\
            &u_1\big\vert_{t=0} = u_1(0) & &\text{in $\Omega \times \{t = 0\}$}, \\
            &\omega_1\big\vert_{t=0} = \omega_1(0) & &\text{in $\Omega \times \{t = 0\}$}, \\
        \end{aligned}
    \right.
\end{equation}

Systems \eqref{1.16} and \eqref{1.6} are similar. Our main focus will be concentrated on $\int_{\Omega^t} \frac{v_r}{r} \frac{v_\varphi^4}{r^2}\, \ud x\ud t'$. To handle this integral we need estimates for solutions to both \eqref{1.16} and \eqref{1.6}. These estimates are presented in Sections \ref{s2}, \ref{s3} and \ref{s4}. Finally, in Section \ref{s5} we eventually combine them. Apart from various energy estimates we also need two non-trivial estimates in weighted Sobolev spaces for solutions to \eqref{1.15}$_3$ (see Corollaries \ref{cor1} and \ref{cor2}). Due to the order of the weight, we need to adjust the order of singularity of $\psi_1$ near $r = 0$. In Lemma \ref{lem2.11} we will see that $\psi_1 \sim O(1)$, thus $\psi_1 \notin H^3_0(\Omega)$ (see Section \ref{s2}). Therefore, we subtract from $\psi_1$ as much as it is needed for this difference to belong to $H^3_0(\Omega)$. This idea is motivated by Kondratev's work (see \cite{Ko67}) and discussed in a separate manuscript (see \cite{NZ}).

\section{Notation and auxiliary results}\label{s2}

First we introduce the function spaces

\begin{definition}\label{d2.1}
    Let $\Omega$ be a cylindrical axially symmetric domain with axis of symmetry inside. We use the following notation for Lebesgue and Sobolev spaces:
    \[
        \begin{aligned}
            &\norm{u}_{L_p(Q)}=\abs{u}_{p,Q},\quad \norm{u}_{L_p(Q^t)}=\abs{u}_{p,Q^t},\\
            &\norm{u}_{L_{p,q}(Q^t)}=\norm{u}_{L_q(0,t;L_p(Q))}=\abs{u}_{p,q,Q^t},
        \end{aligned}
    \]
    where $p,q\in[1,\infty]$, $Q$ replaces either $\Omega$ or $S$.
    \[
        \begin{aligned}
            &\norm{u}_{H^s(Q)}=\norm{u}_{s,Q},\qquad \text{where $H^s(Q)=W_2^s(Q)$},\\
            &\norm{u}_{W_p^s(Q)}=\norm{u}_{s,p,Q},\\
            &\norm{u}_{L_q(0,t;W_p^k(Q))}=\norm{u}_{k,p,q,Q^t},\qquad \norm{u}_{k,p,p,Q^t}=\norm{u}_{k,p,Q^t},
        \end{aligned}
    \]
    where $s,k\in\R1_+$.
\end{definition}

Finally, similarly to Definition 2.1 in \cite{NZ} we introduce weighted spaces $L_{p,\mu}(\Omega)$, $\mu\in\R1$, $p\in[1,\infty]$, with the norm
\[
    \norm{u}_{L_{p,\mu}(\Omega)}=\left(\int_\Omega|u|^pr^{p\mu}\ud x\right)^{\frac{1}{p}}
\]
and
\[
    \norm{u}_{H^k_\mu(\Omega)} = \left(\sum_{\abs{\alpha} \leq k} \int_\Omega \abs{\Ud^\alpha_{r,z} u(r,z)}^2r^{2(\mu + \abs{\alpha} - k)}\, r\ud r\ud z\right)^{\frac{1}{2}},
\]
where $\Ud^\alpha_{r,z} = \partial_r^{\alpha_1}\partial_z^{\alpha_2}$, $\abs{\alpha} = \alpha_1 + \alpha_2$, $\abs{\alpha} \leq k$, $\alpha_i \in \mathbb{N}_0 \equiv\{0,1,2,\ldots\}$, $i=1,2$, $k\in \mathbb{N}_0$ and $\mu \in \mathbb{R}$. In fact, we only use $H^3_0(\Omega)$ and $H^2_0(\Omega)$ and these symbols should not be mixed with Sobolev spaces with zero trace.

We use notation: r.h.s -- right-hand side, l.h.s. -- left-hand side.

By $c$ we denote generic constants. They are time-independent but they may depend on $R$. If a constant depends on a quantity $l$ and this dependence needs to be tracked we write $c(l)$. This means that $c(l) \sim c \cdot l$. Similarly $c\left(\frac{1}{l}\right) \sim \frac{c}{l}$.

\begin{lemma}[Hardy's inequality]\label{lemH}
    Suppose that $f\ge 0$, $p\ge 1$ and $r\neq 0$. Then
    \begin{equation*}
    \bigg(\int_0^\infty\left(\int_0^xf(y)\, \ud y\right)^px^{-r-1}\ud x\bigg)^{1/p}\le \frac{p}{r}\left(\int_0^\infty|yf(y)|^py^{-r-1}\,\ud y\right)^{1/p}.
    \end{equation*}
\end{lemma}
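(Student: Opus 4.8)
The plan is to reduce the inequality to a single application of Minkowski's integral inequality after a scaling substitution; this route delivers the sharp constant $p/r$ directly and treats all $p\ge 1$ at once. (Note that the form of the constant forces us to read the hypothesis as $r>0$, which is what makes the relevant elementary integral below converge.) Write $F(x)=\int_0^x f(y)\,\ud y$. First I would substitute $y=xt$ in the inner integral, so that $F(x)=x\int_0^1 f(xt)\,\ud t$, and the left-hand side raised to the $p$-th power becomes
\[
    \int_0^\infty F(x)^p x^{-r-1}\,\ud x=\int_0^\infty\left(\int_0^1 f(xt)\,\ud t\right)^p x^{p-r-1}\,\ud x.
\]

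The key step is Minkowski's integral inequality, which allows me to pull the $L_p(x^{p-r-1}\,\ud x)$ norm inside the $t$-integration:
\[
    \left(\int_0^\infty\left(\int_0^1 f(xt)\,\ud t\right)^p x^{p-r-1}\,\ud x\right)^{1/p}\le\int_0^1\left(\int_0^\infty f(xt)^p x^{p-r-1}\,\ud x\right)^{1/p}\ud t.
\]
Substituting $s=xt$ in the inner integral and extracting the power of $t$ gives $\int_0^\infty f(xt)^p x^{p-r-1}\,\ud x=t^{\,r-p}\int_0^\infty f(s)^p s^{p-r-1}\,\ud s$, so the right-hand side factorizes as
\[
    \left(\int_0^1 t^{\,r/p-1}\,\ud t\right)\left(\int_0^\infty f(s)^p s^{p-r-1}\,\ud s\right)^{1/p}=\frac{p}{r}\left(\int_0^\infty\big(sf(s)\big)^p s^{-r-1}\,\ud s\right)^{1/p},
\]
where I used $\int_0^1 t^{\,r/p-1}\,\ud t=p/r$ (convergent precisely because $r>0$) and the identity $f(s)^p s^{p-r-1}=(sf(s))^p s^{-r-1}$. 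Taking $p$-th roots yields exactly the claimed bound.

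An equivalent classical route I would keep in reserve is integration by parts in $\int_0^\infty F^p x^{-r-1}\,\ud x$ against $v=-\tfrac1r x^{-r}$, producing $\tfrac{p}{r}\int_0^\infty F^{p-1} f\,x^{-r}\,\ud x$, followed by H\"older's inequality with the weight split as $x^{-r}=x^{-(r+1)(p-1)/p}\,x^{-r+(r+1)(p-1)/p}$; this reproduces a factor of the left-hand side to the power $(p-1)/p$, which is then divided out. I expect the only genuine subtleties to be bookkeeping ones: the interchange of integrals and Minkowski's inequality are justified for nonnegative $f$ by Tonelli's theorem, one may assume the right-hand side is finite (otherwise the statement is vacuous), and in the integration-by-parts variant one must check that the boundary terms $F^p x^{-r}$ vanish as $x\to0^+$ and $x\to\infty$, which follows from $r>0$ together with the finiteness of the right-hand side. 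The borderline case $p=1$ is immediate and in fact an equality, obtained directly from Fubini's theorem.
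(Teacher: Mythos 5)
Your proof is correct, but there is nothing in the paper to compare it against: the authors state this lemma as a classical result (Hardy's inequality) and use it as a black box, giving no proof of their own. Your Minkowski-plus-scaling argument is the standard route to the sharp constant and every step checks out: the substitution $y=xt$ turns $F(x)$ into $x\int_0^1 f(xt)\,\ud t$; Minkowski's integral inequality (valid for $p\ge 1$ and nonnegative integrands, with Tonelli covering the measurability and interchange issues) moves the $L_p(x^{p-r-1}\ud x)$ norm inside the $t$-integral; and the change of variables $s=xt$ produces the factor $t^{r/p-1}$, whose integral over $(0,1)$ is exactly $p/r$. Your observation that the hypothesis must be read as $r>0$ is also well taken and worth flagging to the authors: as literally stated with merely $r\neq 0$, the inequality fails for $r<0$, since the right-hand side would then carry the negative constant $p/r$ against a nonnegative left-hand side; the $r<0$ half of the classical Hardy inequality requires the inner integral to be $\int_x^\infty f(y)\,\ud y$ rather than $\int_0^x f(y)\,\ud y$. (For the paper's purposes this is harmless, as the lemma is only ever invoked with a positive exponent.) Your reserve route via integration by parts against $-\tfrac1r x^{-r}$ followed by H\"older is likewise standard and sound, with the caveats you already list about the boundary terms and about dividing out a factor of the left-hand side that must first be known finite, which one secures by truncating $f$.
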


\begin{lemma}\label{l2.2}
    Let $\bm f\in L_{2,1}(\Omega^t)$, $\bm v(0)\in L_2(\Omega)$. Assume that $v_\varphi\big\vert_S=0$, $\bar n\cdot \bm v\big\vert_S=0$, $\omega_\varphi\big\vert_S=0$. Then, solutions to \eqref{1.5} satisfy the estimate
    \begin{multline}\label{2.1}
        \norm{\bm v(t)}_{L_2(\Omega)}^2 + \nu\int_{\Omega^t}\left(|\nabla v_r|^2+|\nabla v_\varphi|^2+|\nabla v_z|^2\right)\,\ud x\ud t' \\
        +\nu\int_{\Omega^t}\left({v_r^2\over r^2}+{v_\varphi^2\over r^2}\right)\,\ud x\ud t'\leq D_1^2
    \end{multline}
\end{lemma}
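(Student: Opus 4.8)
The plan is to run the standard kinetic-energy argument, but working with the cylindrical component form \eqref{1.11} rather than with \eqref{1.5} directly, since \eqref{1.11} is exactly the form designed for energy estimates and the boundary data are already reduced to \eqref{1.10}. Concretely, I would multiply \eqref{1.11}$_1$ by $v_r$, \eqref{1.11}$_2$ by $v_\varphi$, \eqref{1.11}$_3$ by $v_z$, integrate each over $\Omega$ (with measure $\ud x = r\,\ud r\,\ud\varphi\,\ud z$) and add. The time-derivative terms assemble into $\frac12\Dt\norm{\bm v}_{L_2(\Omega)}^2$. The convective terms sum to $\int_\Omega\bm v'\cdot\nabla\big(\tfrac12\abs{\bm v}^2\big)\,\ud x$ with $\bm v'=(v_r,v_z)$; integrating by parts and using $(rv_r)_{,r}+(rv_z)_{,z}=0$ from \eqref{1.11}$_4$ leaves only the boundary flux $\int_S\tfrac12\abs{\bm v}^2(\bm v'\cdot\bar n)\,\ud S$, which vanishes because $\bm v\cdot\bar n=0$ on $S$. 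The same incompressibility-plus-$\bm v\cdot\bar n=0$ reasoning kills the pressure contribution $\int_\Omega(p_{,r}v_r+p_{,z}v_z)\,\ud x$. A key simplification is that the two ``geometric'' nonlinearities cancel: multiplying the $-\frac1r v_\varphi^2$ term in \eqref{1.11}$_1$ by $v_r$ and the $+\frac1r v_rv_\varphi$ term in \eqref{1.11}$_2$ by $v_\varphi$ gives $-\frac1r v_rv_\varphi^2+\frac1r v_rv_\varphi^2=0$. Finally the forcing contributes $\int_\Omega\bm f\cdot\bm v\,\ud x\le\norm{\bm f}_{L_2(\Omega)}\norm{\bm v}_{L_2(\Omega)}$ by Cauchy--Schwarz.

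Next I would treat the viscous terms. The zeroth-order pieces $+\nu\frac1{r^2}v_r$ and $+\nu\frac1{r^2}v_\varphi$ in \eqref{1.11}$_{1,2}$, tested against $v_r$ and $v_\varphi$, produce the nonnegative dissipation $\nu\int_\Omega\big(\frac{v_r^2}{r^2}+\frac{v_\varphi^2}{r^2}\big)\,\ud x$ exactly as in \eqref{2.1} (note there is no such term for $v_z$, matching the statement). For the scalar Laplacian part I would use that for an axially symmetric $g$ one has $\int_\Omega\Delta g\,g\,\ud x=-\int_\Omega\big((\partial_r g)^2+(\partial_z g)^2\big)\,\ud x+\int_S g\,\partial_n g\,\ud S$, so that $-\nu\int_\Omega\Delta v_i\,v_i\,\ud x=\nu\int_\Omega\abs{\nabla v_i}^2\,\ud x-\nu\int_S v_i\,\partial_n v_i\,\ud S$ for $i\in\{r,\varphi,z\}$.

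The crux is showing that the surface terms $\int_S v_i\,\partial_n v_i\,\ud S$ all vanish; this is exactly where the Lady\v zhenskaya condition $\omega_\varphi=0$ is needed. For $v_\varphi$ it is immediate since $v_\varphi=0$ on $S$. On $S_1$ ($r=R$, $\partial_n=\partial_r$) the $v_r$-term vanishes because $v_r=0$ there, and for the $v_z$-term I differentiate $v_r\equiv0$ along $S_1$ (the $z$-direction) to get $v_{r,z}=0$, whence the boundary relation $v_{r,z}-v_{z,r}=0$ in \eqref{1.10} forces $\partial_n v_z=v_{z,r}=0$. On $S_2$ ($z=\pm a$, $\partial_n=\pm\partial_z$) the argument is symmetric: $v_z=0$ gives $v_{z,r}=0$ along $S_2$, hence $v_{r,z}=0$, i.e. $\partial_n v_r=0$, while the $v_z$-term vanishes outright. \emph{This boundary bookkeeping is the main obstacle}, since the velocity satisfies only the Navier-type conditions $\bm v\cdot\bar n=0$, $\omega_\varphi=0$ rather than the no-slip condition, and it is precisely $\omega_\varphi=0$ that converts the missing tangential information into the needed normal-derivative vanishing.

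Collecting everything yields the pointwise-in-time differential inequality
\begin{equation*}
    \frac12\Dt\norm{\bm v}_{L_2(\Omega)}^2+\nu\int_\Omega\big(\abs{\nabla v_r}^2+\abs{\nabla v_\varphi}^2+\abs{\nabla v_z}^2\big)\,\ud x+\nu\int_\Omega\Big(\frac{v_r^2}{r^2}+\frac{v_\varphi^2}{r^2}\Big)\,\ud x\le\norm{\bm f}_{L_2(\Omega)}\norm{\bm v}_{L_2(\Omega)}.
\end{equation*}
To finish I would integrate in $t$, bound the right-hand side by $\big(\sup_{[0,t]}\norm{\bm v}_{L_2(\Omega)}\big)\norm{\bm f}_{L_{2,1}(\Omega^t)}$, and absorb the supremum via Young's inequality; this closes the estimate and, after a routine choice of the Young constants, produces exactly the bound \eqref{2.1} with $D_1^2=3\norm{\bm f}_{L_{2,1}(\Omega^t)}^2+2\norm{\bm v(0)}_{L_2(\Omega)}^2$. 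The manipulations above are formal a priori computations, justified for the regular solutions under consideration (or, for weak solutions, read as the energy inequality).
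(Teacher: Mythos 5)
Your proposal is correct and follows essentially the same route as the paper: test \eqref{1.11} with $(v_r,v_\varphi,v_z)$, observe the cancellation $-\frac1r v_rv_\varphi^2+\frac1r v_rv_\varphi^2=0$, kill the convective, pressure and boundary terms using \eqref{1.10} (with $\omega_\varphi=0$ supplying the missing normal derivatives, exactly as you argue), and integrate in time. The only cosmetic difference is at the end, where the paper first derives $\norm{\bm v(t)}_{L_2(\Omega)}\le\norm{\bm f}_{L_{2,1}(\Omega^t)}+\norm{\bm v(0)}_{L_2(\Omega)}$ from $\Dt\norm{\bm v}_{L_2(\Omega)}\le\norm{\bm f}_{L_2(\Omega)}$ and substitutes back, rather than absorbing a supremum by Young, which is how it lands exactly on $D_1^2=3\norm{\bm f}_{L_{2,1}(\Omega^t)}^2+2\norm{\bm v(0)}_{L_2(\Omega)}^2$.
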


\begin{proof}
    Multiplying \eqref{1.11}$_1$ by $v_r$, \eqref{1.11}$_2$ by $v_\varphi$, \eqref{1.11}$_3$ by $v_z$, adding the results, integrating over $\Omega$ and using \eqref{1.10} yields
    \begin{multline}\label{2.2}
        \frac{1}{2}\Dt\int_\Omega\left(v_r^2+v_\varphi^2+v_z^2\right)\,\ud x-\nu\int_{S_1} v_{z,r}v_zdS_1-\nu\int_{S_2}v_{r,z}v_rdS_2 \\
        + \nu\int_\Omega\left(|\nabla v_r|^2+|\nabla v_\varphi|^2+|\nabla v_z|^2\right)\,\ud x+ \nu\int_\Omega\bigg({v_r^2\over r^2}+{v_\varphi^2\over r^2}\bigg)\,\ud x \\
        + \int_\Omega\left(-\frac{1}{r}v_\varphi^2v_r+\frac{1}{r}v_rv_\varphi^2\right)\,\ud x +\int_\Omega\left(p_{,r}v_r+p_{,z}v_z\right)\,\ud x \\
        = \int_\Omega\left(f_rv_r+f_\varphi v_\varphi+f_zv_z\right)\,\ud x.
    \end{multline}
    In view \eqref{1.10} the boundary terms in \eqref{2.2} vanish. The last term on the l.h.s. of \eqref{2.2} vanishes in virtue of \eqref{1.10} and the equation of continuity \eqref{1.11}$_4$.

    Using that $\abs{\bm v}^2=v_r^2+v_\varphi^2+v_z^2$, we rewrite \eqref{2.2} the form
    \begin{multline}\label{2.3}
        \frac{1}{2}\Dt \norm{\bm v}_{L_2(\Omega)}^2+\nu\int_\Omega\left(|\nabla v_r|^2 + |\nabla v_\varphi|^2+|\nabla v_z|^2\right)\,\ud x + \nu\int_\Omega\bigg({v_r^2\over r^2}+{v_\varphi^2\over r^2}\bigg)\,\ud x \\
        =\int_\Omega\left(f_rv_r+f_\varphi v_\varphi+f_zv_z\right)\,\ud x.
    \end{multline}
    Applying the H\"older inequality to the r.h.s. of \eqref{2.3} yields
    \begin{equation}\label{2.4}
        \Dt\norm{\bm v}_{L_2(\Omega)}\le\norm{\bm f}_{L_2(\Omega)},
    \end{equation}
    where we used that $\abs{\bm f}^2=f_r^2+f_\varphi^2+f_z^2$.

    Integrating \eqref{2.4} with respect to time implies
    \begin{equation}
        \norm{\bm v(t)}_{L_2(\Omega)}\le\norm{\bm f}_{L_{2,1}(\Omega^t)}+\norm{\bm v(0)}_{L_2(\Omega)}.
        \label{2.5}
    \end{equation}
    Integrating \eqref{2.3} with respect to time, using the H\"older inequality in the r.h.s. of \eqref{2.3} and using \eqref{2.5} we obtain
    \begin{multline*}\label{2.6}
        \frac{1}{2}\norm{\bm v(t)}_{L_2(\Omega)}^2 + \nu\int_{\Omega^t}\left(|\nabla v_r|^2+|\nabla v_\varphi|^2+|\nabla v_z|^2\right)\,\ud x\ud t' +\nu\int_{\Omega^t}\bigg({v_r^2\over r^2}+{v_z^2\over r^2}\bigg)\,\ud x\ud t' \\
        \leq \norm{\bm f}_{L_{2,1(\Omega^t)}}\left(\norm{\bm f}_{L_{2,1}(\Omega^t)} +\norm{\bm v(0)}_{L_2(\Omega)}\right)+\frac{1}{2}\norm{\bm v(0)}_{L_2(\Omega)}^2.
    \end{multline*}
    The above inequality implies \eqref{2.1} and concludes the proof.
\end{proof}
\goodbreak

\begin{lemma}\label{l2.3}
    Consider problem \eqref{1.14}. Assume that $f_0\in L_{\infty,1}(\Omega^t)$, $u(0)\in L_\infty(\Omega)$. Then
    \begin{equation}
        \norm{u(t)}_{L_\infty(\Omega)}\le D_2
        \label{2.7}
    \end{equation}
\end{lemma}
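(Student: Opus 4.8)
The statement is precisely the maximum principle for the swirl $u=rv_\varphi$, and I would obtain it as an a priori estimate via an $L_p$-testing argument followed by $p\to\infty$, rather than by invoking a comparison principle directly (the degenerate coefficient $2\nu/r$ at the axis makes the $L_p$ route cleaner to justify). First I would multiply \eqref{1.14}$_1$ by $u\abs{u}^{p-2}$ with $p\ge2$ and integrate over $\Omega$. The time-derivative term gives $\frac1p\Dt\norm{u}_{L_p(\Omega)}^p$. For the convective term I would use $\Div\bm v=0$ together with $\bm v\cdot\bar n=0$ on $S$ (conditions \eqref{1.5}$_{2,3}$) to write $\int_\Omega(\bm v\cdot\nabla u)u\abs{u}^{p-2}\,\ud x=\frac1p\int_\Omega\bm v\cdot\nabla\abs{u}^p\,\ud x=0$ after integration by parts.

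The crucial step is the elliptic part. Writing the volume element as $r\,\ud r\,\ud z$ (times $2\pi$) and integrating by parts against $\phi=u\abs{u}^{p-2}$, whose gradient is $(p-1)\abs{u}^{p-2}\nabla u$, I would turn $-\nu\int_\Omega\Delta u\,\phi\,\ud x$ into the nonnegative Dirichlet form $\nu(p-1)\int_\Omega\abs{u}^{p-2}\abs{\nabla u}^2\,\ud x$, the boundary contributions on $S_1,S_2$ vanishing because $u=0$ there. For the singular drift $\frac{2\nu}{r}u_{,r}$, the weight $r$ from the measure cancels the $1/r$, so that $\frac{2\nu}{r}\int_\Omega u_{,r}\,\phi\,\ud x$ reduces to an integral of $\partial_r(\abs{u}^p)$ in $\ud r\,\ud z$; this integrates to boundary terms at $r=R$ (where $u=0$) and at the axis $r=0$ (where $u=rv_\varphi=0$), hence vanishes entirely. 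Thus the whole elliptic contribution is nonnegative and may be discarded.

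Then I would estimate the forcing by $\int_\Omega f_0\,u\abs{u}^{p-2}\,\ud x\le\norm{f_0}_{L_\infty(\Omega)}\norm{u}_{L_p(\Omega)}^{p-1}\abs{\Omega}^{1/p}$ using Hölder's inequality, cancel the factor $\norm{u}_{L_p(\Omega)}^{p-1}$, and arrive at $\Dt\norm{u}_{L_p(\Omega)}\le\abs{\Omega}^{1/p}\norm{f_0}_{L_\infty(\Omega)}$. Integrating in time and bounding $\norm{u(0)}_{L_p(\Omega)}\le\abs{\Omega}^{1/p}\norm{u(0)}_{L_\infty(\Omega)}$ gives $\norm{u(t)}_{L_p(\Omega)}\le\abs{\Omega}^{1/p}\big(\norm{u(0)}_{L_\infty(\Omega)}+\norm{f_0}_{L_{\infty,1}(\Omega^t)}\big)=\abs{\Omega}^{1/p}D_2$. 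Letting $p\to\infty$, so that $\abs{\Omega}^{1/p}\to1$ and $\norm{u(t)}_{L_p(\Omega)}\to\norm{u(t)}_{L_\infty(\Omega)}$, yields exactly \eqref{2.7}.

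The main obstacle is rigorously handling the singular coefficient $2\nu/r$ near the axis: justifying the integration by parts there and confirming that the axis boundary term vanishes (which it does precisely because $u=rv_\varphi$ vanishes on $r=0$), and, for genuinely weak solutions, legitimizing the test function $u\abs{u}^{p-2}$ through a Galerkin or regularization argument. The division by $\norm{u}_{L_p(\Omega)}^{p-1}$ when it may vanish is a harmless technicality, dealt with by a regularization or by noting the bound is trivial there.
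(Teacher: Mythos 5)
Your proposal is correct and follows essentially the same route as the paper: multiply \eqref{1.14}$_1$ by $u\abs{u}^{p-2}$, discard the nonnegative Dirichlet form, observe that the singular drift reduces (after the cancellation of $r$ against $1/r$) to $\int\partial_r(\abs{u}^p)\,\ud r\,\ud z$, which vanishes because $u=0$ at $r=R$ and $u=rv_\varphi=0$ at the axis, then integrate the resulting differential inequality in time and let $p\to\infty$. The only cosmetic differences are that the paper estimates the forcing by $\norm{f_0}_{L_s(\Omega)}$ rather than $\abs{\Omega}^{1/p}\norm{f_0}_{L_\infty(\Omega)}$ and cites \cite{LW} for $u\vert_{r=0}=0$, neither of which changes the argument.
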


\begin{proof}
    Multiplying $\eqref{1.14}_1$ by $u|u|^{s-2}$, $s>2$ integrating over $\Omega$ and by parts and using that $u\big\vert_S=0$, we obtain
    \begin{multline}
        \frac{1}{s}\Dt \norm{u}_{L_s(\Omega)}^s+{4\nu(s-1)\over s^2}\norm{\nabla|u|^{s/2}}_{L_2(\Omega)}^2 + {\nu\over s}\int_\Omega\left(|u|^s\right)_{,r}\, \ud r  \ud z \\
        =\int_\Omega f_0u|u|^{s-2}\ud x,
        \label{2.8}
    \end{multline}
    where the last term of \eqref{2.8} equals $I \equiv \frac{\nu}{s} \int_{-a}^a \abs{u}^s\big\vert_{r=0}^{r=R}\, \ud z$. From \cite{LW} it follows that $u\big\vert_{r=0}=0$. Since $u\big\vert_{r=R}=0$ and using the boundary condition $v_\varphi\big\vert_{S} = 0$ we conclude that $I = 0$. Then, we derive from \eqref{2.8} the inequality
    \begin{equation}
        \Dt\norm{u}_{L_s(\Omega)}\le\norm{f_0}_{L_s(\Omega)}.
    \label{2.9}
    \end{equation}
    Integrating \eqref{2.9} with respect to time and passing with $s\to\infty$ we derive \eqref{2.7} from \eqref{2.9}. This ends the proof.
\end{proof}

\begin{lemma}\label{l2.4}
Let estimates \eqref{2.1} and \eqref{2.7} hold. Then
\begin{equation}
\norm{v_\varphi}_{L_4(\Omega^t)}\le D_1^{1/2}D_2^{1/2}.
\label{2.10}
\end{equation}
\end{lemma}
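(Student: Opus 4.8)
The plan is to obtain \eqref{2.10} directly from the two preceding estimates by means of a pointwise algebraic splitting of $v_\varphi^4$, with no new differential inequality required. The guiding observation is that the fourth power of $v_\varphi$ factors into a piece that is controlled in $L_\infty$ by the swirl $u=rv_\varphi$ and a piece that is controlled in the weighted $L_2$ norm already supplied by the energy estimate \eqref{2.1}.

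Concretely, recalling $u=rv_\varphi$ from \eqref{1.13}, I would start from the pointwise identity
\[
    v_\varphi^4 = (rv_\varphi)^2\cdot\frac{v_\varphi^2}{r^2} = u^2\,\frac{v_\varphi^2}{r^2}.
\]
Integrating over $\Omega^t$ and pulling the swirl out in $L_\infty$ gives
\[
    \int_{\Omega^t} v_\varphi^4\,\ud x\ud t' \le \norm{u}_{L_\infty(\Omega^t)}^2\int_{\Omega^t}\frac{v_\varphi^2}{r^2}\,\ud x\ud t'.
\]
The first factor is bounded by $D_2^2$: although \eqref{2.7} is stated for a fixed time, its right-hand side $D_2$ controls the whole interval $[0,t]$ (the term $\norm{f_0}_{L_{\infty,1}(\Omega^t)}$ is monotone in $t$), so $\sup_{t'\le t}\norm{u(t')}_{L_\infty(\Omega)}\le D_2$ and hence $\norm{u}_{L_\infty(\Omega^t)}\le D_2$. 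The second factor is exactly the weighted dissipation term on the left-hand side of \eqref{2.1}, whence $\int_{\Omega^t}\frac{v_\varphi^2}{r^2}\,\ud x\ud t' \le D_1^2$ (up to the viscosity factor $\nu$, which I absorb into the generic constant).

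Combining the two bounds yields $\norm{v_\varphi}_{L_4(\Omega^t)}^4 \le D_1^2 D_2^2$, and taking the fourth root gives \eqref{2.10}. I do not expect any genuine obstacle here: the argument is a one-line H\"older-type estimate once the factorization $v_\varphi^4 = u^2\,v_\varphi^2/r^2$ is noticed. The only points that require a little care are (i) reading the $L_\infty$ bound on $u$ as a bound over the whole space-time slab $\Omega^t$ rather than at a single instant, which is legitimate because $D_2$ is nondecreasing in $t$, and (ii) the bookkeeping of the factor $\nu$ coming from \eqref{2.1}, which is harmless since the generic constant $c$ is permitted to depend on $\nu$.
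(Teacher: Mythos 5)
Your argument is correct and is essentially identical to the paper's proof: the same factorization $v_\varphi^4 = (rv_\varphi)^2\,v_\varphi^2/r^2$, with the swirl factor bounded by $D_2^2$ via \eqref{2.7} and the weighted integral bounded by $D_1^2$ via \eqref{2.1}. The remarks about monotonicity of $D_2$ in $t$ and the viscosity factor $\nu$ are sensible housekeeping but do not change the substance.
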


\begin{proof}
We have
\[
    \int_{\Omega^t}|v_\varphi|^4\,\ud x\ud t'=\int_{\Omega^t}r^2v_\varphi^2{v_\varphi^2\over r^2}\,\ud x\ud t' \le\norm{rv_\varphi}_{L_\infty(\Omega^t)}^2\int_{\Omega^t}{v_\varphi^2\over r^2}\,\ud x\ud t'\le D_2^2D_1^2.
\]
This implies \eqref{2.10} and concludes the proof.
\end{proof}

\begin{lemma}\label{l2.5}
    Let $\omega_1\in L_2(\Omega)$. Then solutions to \eqref{1.16}$_3$ satisfy
    \begin{equation}
        \norm{\psi_1}_{H^1(\Omega)}^2+\int_{-a}^a\psi_1^2(0)\,\ud z\le c\norm{\omega_1}_{L_2(\Omega)}^2,
    \label{2.11}
    \end{equation}
    where $\psi_1(0)=\psi_1|_{r=0}$. In addition, if $\omega_1\in L_{2,\mu}(\Omega)$, $\mu\in(0,1)$ then
    \begin{equation}
        \norm{\psi_1}_{L_{2,-\mu}(\Omega)}^2+\norm{\psi_1}_{H^1(\Omega)}^2+\int_{-a}^a \psi_1^2(0)\,\ud z\le c\norm{\omega_1}_{L_{2,\mu}(\Omega)}^2,
    \label{2.12}
    \end{equation}
    where $\psi_1(0)=\psi_1|_{r=0}$.
\end{lemma}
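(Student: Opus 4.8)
The plan is to derive both \eqref{2.11} and \eqref{2.12} from a single weighted energy identity for \eqref{1.16}$_3$, the two estimates differing only in the way the right-hand side is treated. First I would test \eqref{1.16}$_3$ with $\psi_1$ in the $L_2(\Omega)$ pairing, i.e. integrate $\left(-\Delta\psi_1-\frac{2}{r}\psi_{1,r}\right)\psi_1$ against $\ud x$. The genuine three-dimensional Laplacian part contributes $\int_\Omega|\nabla\psi_1|^2\,\ud x$ with no axis contribution: the boundary integral over $S$ vanishes because $\psi_1\big\vert_S=0$, and the axis $r=0$ is a null set in $\R3$. The nonstandard term $-\frac{2}{r}\psi_{1,r}$ is the one that matters near the axis; after cancelling the weight $r$ coming from $\ud x$ it becomes $-4\pi\int_{-a}^a\int_0^R\psi_{1,r}\psi_1\,\ud r\,\ud z$, and the one-dimensional integration $\int_0^R\psi_{1,r}\psi_1\,\ud r=\frac{1}{2}[\psi_1^2]_{r=0}^{r=R}=-\frac{1}{2}\psi_1^2(0)$ produces exactly the boundary term on the axis with a favourable sign. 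This yields the identity
\begin{equation*}
    \int_\Omega|\nabla\psi_1|^2\,\ud x+c_0\int_{-a}^a\psi_1^2(0)\,\ud z=\int_\Omega\omega_1\psi_1\,\ud x,\qquad c_0>0.
\end{equation*}
Justifying the endpoint evaluation requires $\psi_1$ to have a well-defined trace at $r=0$; this is where the behaviour $\psi_1\sim O(1)$ from Lemma \ref{lem2.11} enters, together with a routine approximation by smooth functions.

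To obtain \eqref{2.11} I would bound the right-hand side by Cauchy--Schwarz, $\int_\Omega\omega_1\psi_1\,\ud x\le\norm{\omega_1}_{L_2(\Omega)}\norm{\psi_1}_{L_2(\Omega)}$, then use the Poincar\'e inequality $\norm{\psi_1}_{L_2(\Omega)}\le c\norm{\nabla\psi_1}_{L_2(\Omega)}$ (valid since $\psi_1\big\vert_S=0$) followed by Young's inequality to absorb $\norm{\nabla\psi_1}_{L_2(\Omega)}$ into the left-hand side. This gives $\norm{\nabla\psi_1}_{L_2(\Omega)}^2+\int_{-a}^a\psi_1^2(0)\,\ud z\le c\norm{\omega_1}_{L_2(\Omega)}^2$, and one more application of Poincar\'e upgrades the gradient to the full $H^1(\Omega)$-norm.

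For \eqref{2.12} the only change is on the right-hand side, where I would split the weight, $\int_\Omega\omega_1\psi_1\,\ud x=\int_\Omega(\omega_1 r^\mu)(\psi_1 r^{-\mu})\,\ud x\le\norm{\omega_1}_{L_{2,\mu}(\Omega)}\norm{\psi_1}_{L_{2,-\mu}(\Omega)}$. The crux is then a weighted Hardy estimate controlling $\norm{\psi_1}_{L_{2,-\mu}(\Omega)}$ by the gradient. Using $\psi_1\big\vert_{r=R}=0$ to write $\psi_1(r,z)=-\int_r^R\psi_{1,r}(s,z)\,\ud s$ and applying Hardy's inequality (Lemma \ref{lemH}, in the form adapted to functions vanishing at the outer radius $r=R$) gives $\int_0^R\psi_1^2 r^{1-2\mu}\,\ud r\le c(\mu)\int_0^R\psi_{1,r}^2 r^{3-2\mu}\,\ud r$, where the restriction $\mu\in(0,1)$ is precisely what makes the Hardy exponent admissible. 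Since $r^{3-2\mu}=r^{2-2\mu}\,r\le R^{2-2\mu}\,r$ on $(0,R)$, integrating in $z$ yields $\norm{\psi_1}_{L_{2,-\mu}(\Omega)}\le c\norm{\nabla\psi_1}_{L_2(\Omega)}$. Feeding this into the identity gives $\norm{\nabla\psi_1}_{L_2(\Omega)}^2\le c\norm{\omega_1}_{L_{2,\mu}(\Omega)}\norm{\nabla\psi_1}_{L_2(\Omega)}$, hence $\norm{\nabla\psi_1}_{L_2(\Omega)}\le c\norm{\omega_1}_{L_{2,\mu}(\Omega)}$; the quantities $\norm{\psi_1}_{L_{2,-\mu}(\Omega)}$, $\int_{-a}^a\psi_1^2(0)\,\ud z$ and (by Poincar\'e) $\norm{\psi_1}_{L_2(\Omega)}$ are then all dominated by $\norm{\omega_1}_{L_{2,\mu}(\Omega)}$, which is \eqref{2.12}.

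I expect the main obstacles to be the two places where the axis $r=0$ is delicate. First, the rigorous derivation of the axis term $\int_{-a}^a\psi_1^2(0)\,\ud z$ --- establishing that $\psi_1(0)$ is well defined, that $\psi_{1,r}\psi_1$ is integrable up to $r=0$, and that the endpoint contribution carries the stated sign --- relies on the qualitative behaviour $\psi_1\sim O(1)$ proved in Lemma \ref{lem2.11} and on an approximation argument. Second, the weighted Hardy step must be matched to the available gradient control, which comes only with the weight $r$; this is exactly why the admissible range is $\mu\in(0,1)$, with $\mu>0$ making $L_{2,-\mu}(\Omega)$ a genuine gain near the axis and $\mu<1$ keeping the Hardy exponent positive.
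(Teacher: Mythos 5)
Your proposal is correct and follows essentially the same route as the paper: testing \eqref{1.16}$_3$ with $\psi_1$ to produce the identity $\norm{\nabla\psi_1}_{L_2(\Omega)}^2+\int_{-a}^a\psi_1^2(0)\,\ud z=\int_\Omega\omega_1\psi_1\,\ud x$, then H\"older--Poincar\'e--Young for \eqref{2.11} and the weight-splitting plus Hardy inequality ($\int_\Omega\psi_1^2r^{-2\mu}\,\ud x\le c\int_\Omega\psi_{1,r}^2r^{2-2\mu}\,\ud x\le cR^{2-2\mu}\norm{\nabla\psi_1}_{L_2(\Omega)}^2$ for $\mu\in(0,1)$) for \eqref{2.12}. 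Your explicit use of $\psi_1(r,z)=-\int_r^R\psi_{1,r}(s,z)\,\ud s$ in the Hardy step, and your attention to the trace at $r=0$, only make more precise what the paper leaves terse.
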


\begin{proof}
    Multiply \eqref{1.16}$_3$ by $\psi_1$, integrate over $\Omega$ and use boundary condition \eqref{1.16}$_4$. Then we obtain
    \begin{equation}
        \norm{\nabla\psi_1}_{L_2(\Omega)}^2-\int_\Omega\partial_r\psi_1^2\, \ud r\ud z= \int_\Omega\omega_1\psi_1\, \ud x.
    \label{2.13}
    \end{equation}
    Applying the H\"older inequality to the r.h.s. of \eqref{2.13}, using the Poincar\'e inequality and boundary condition \eqref{1.14}$_4$ we obtain \eqref{2.11}.

    Using weighted spaces we can estimate the r.h.s. of \eqref{2.13} by
    $$
    \norm{\omega_1}_{L_{2,\mu}(\Omega)}\norm{\psi_1}_{L_{2,-\mu}(\Omega)}.
    $$
    By the Hardy inequality (see Lemma \ref{lemH}) and $\mu\in(0,1)$, $r\le R$, we get
    $$
    \int_\Omega|\psi_1|^2r^{-2\mu}\ud x\le c\int_\Omega|\psi_{1,r}|^2r^{2-2\mu}\ud x\le cR^{2-2\mu}\int_\Omega|\nabla\psi_1|^2\,\ud x.
    $$
    Since $\mu \in (0,1)$ the bound $\int_\Omega \abs{\psi_1}^2 r^{-2\mu}\, \ud x < \infty$ does not imply $\psi_1\big\vert_{r=0} = 0$. Then \eqref{2.12} holds. This concludes the proof.
\end{proof}

\begin{lemma}\label{l2.6}
    Assume that $u_1(0)\in L_\infty(\Omega)$, $f_1, \psi_{1,z}\in L_1(0,t;L_\infty(\Omega))$. Then for solutions to \eqref{1.16} the following inequality
    \begin{equation}\label{2.14}
        \norm{u_1(t)}_{L_\infty(\Omega)} \leq \exp\left(\int_0^t\norm{\psi_{1,z}(t')}_{L_\infty(\Omega)}\, \ud t'\right) D_2.
        %\leq \exp\left(\int_0^t\norm{\psi_{1,z}(t')}_{L_\infty(\Omega)}\, \ud t'\right) \left(\norm{f_1}_{L_1(0,t;L_\infty(\Omega))}+\norm{u_1(0)}_{L_\infty(\Omega)}\right)
    \end{equation}
    holds.
\end{lemma}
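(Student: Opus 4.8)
The plan is to run a maximum-principle (that is, $L_s$-energy) argument on the swirl equation $\eqref{1.16}_1$ and then let $s\to\infty$, exactly in the spirit of Lemma \ref{l2.3}. Concretely, I would multiply $\eqref{1.16}_1$ by $u_1\abs{u_1}^{s-2}$ with $s>2$ and integrate over $\Omega$. The time derivative produces $\frac1s\Dt\norm{u_1}_{L_s(\Omega)}^s$, while the right-hand side yields, after Hölder, the two contributions $2\int_\Omega\psi_{1,z}\abs{u_1}^s\,\ud x\le 2\norm{\psi_{1,z}}_{L_\infty(\Omega)}\norm{u_1}_{L_s(\Omega)}^s$ and $\int_\Omega f_1 u_1\abs{u_1}^{s-2}\,\ud x\le\norm{f_1}_{L_s(\Omega)}\norm{u_1}_{L_s(\Omega)}^{s-1}$. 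The crux is then to show that the two remaining terms on the left -- the convective and the viscous one -- have the correct sign.

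For the convective term I would use that the reduced field $\bm v'=(v_r,v_z)$ is divergence-free in the weighted sense, $(rv_r)_{,r}+(rv_z)_{,z}=0$ by $\eqref{1.11}_4$. Writing $\bm v\cdot\nabla u_1\,u_1\abs{u_1}^{s-2}=\frac1s\bm v'\cdot\nabla\abs{u_1}^s$ and integrating by parts, the bulk integral vanishes, while the boundary contributions vanish because $v_r=0$ on $S_1$, $v_z=0$ on $S_2$ and $u_1=0$ on $S$, cf. \eqref{1.10}. Hence the convective term contributes nothing.

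The main obstacle is the degenerate viscous operator $\Delta+\frac2r\partial_r=\partial_r^2+\frac3r\partial_r+\partial_z^2$, which is singular at $r=0$. Multiplying by $u_1\abs{u_1}^{s-2}$ and integrating in the measure $r\,\ud r\,\ud z$, the $z$-part and the $\partial_r^2$-part integrate by parts into the nonnegative Dirichlet term $\nu(s-1)\int_\Omega\abs{u_1}^{s-2}\abs{\nabla u_1}^2\,r\,\ud r\,\ud z$ (the boundary terms vanish: $u_1=0$ on $S$ at $r=R$ and at $z=\pm a$, and the factor $r$ kills the $r=0$ contribution), while the surviving first-order terms combine into $-2\nu\int u_{1,r}u_1\abs{u_1}^{s-2}\,\ud r\,\ud z=\frac{2\nu}{s}\int_{-a}^a\abs{u_1(0,z)}^s\,\ud z\ge0$. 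The subtle point -- and the contrast with Lemma \ref{l2.3} -- is that here $u_1\big\vert_{r=0}$ need \emph{not} vanish, yet the associated term carries a favorable sign and may simply be discarded; so the whole viscous term is nonnegative.

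Dropping these two nonnegative left-hand terms leaves $\frac1s\Dt\norm{u_1}_{L_s(\Omega)}^s\le 2\norm{\psi_{1,z}}_{L_\infty(\Omega)}\norm{u_1}_{L_s(\Omega)}^s+\norm{f_1}_{L_s(\Omega)}\norm{u_1}_{L_s(\Omega)}^{s-1}$; dividing by $\norm{u_1}_{L_s(\Omega)}^{s-1}$ gives the linear differential inequality $\Dt\norm{u_1}_{L_s(\Omega)}\le 2\norm{\psi_{1,z}}_{L_\infty(\Omega)}\norm{u_1}_{L_s(\Omega)}+\norm{f_1}_{L_s(\Omega)}$. I would then integrate by Grönwall and finally let $s\to\infty$, turning the $L_s$ norms into $L_\infty$ norms and collecting $\norm{u_1(0)}_{L_\infty(\Omega)}+\norm{f_1}_{L_{\infty,1}(\Omega^t)}$ into the data constant $D_2$, which produces the exponential bound \eqref{2.14} (up to the harmless factor $2$ in the exponent coming from the reaction term $2u_1\psi_{1,z}$). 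All the genuine work is the sign bookkeeping of the singular viscous term at $r=0$; the rest is routine.
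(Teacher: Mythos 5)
Your proposal follows essentially the same route as the paper: multiply $\eqref{1.16}_1$ by $u_1\abs{u_1}^{s-2}$, discard the convective term and the nonnegative singular viscous contribution $\frac{2\nu}{s}\int_{-a}^{a}\abs{u_1}^s\big\vert_{r=0}\,\ud z$, apply H\"older and Gr\"onwall, and let $s\to\infty$; your sign bookkeeping at $r=0$ is correct and is in fact spelled out more carefully than in the paper's identity \eqref{2.15}. Your observation about the factor $2$ in the exponent (coming from the reaction term $2u_1\psi_{1,z}$, which the paper silently absorbs) is accurate and harmless.
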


\begin{proof}
    Multiply $\eqref{1.16}_1$ by $u_1|u_1|^{s-2}$ and integrate over $\Omega$. Then we have
    \begin{equation}
        \frac{1}{s}\Dt\norm{u_1}_{L_s(\Omega)}^s+{4\nu(s-1)\over s^2}\norm{\nabla u_1^{s/2}}_{L_2(\Omega)}^2 =\int_\Omega\psi_{1,z}u_1^s\ud x+\int_\Omega f_1u_1^{s-1}\ud x.
    \label{2.15}
    \end{equation}
    Applying the H\"older inequality to the r.h.s. of \eqref{2.15} and simplifying we get
    \begin{equation}
        \Dt\norm{u_1}_{L_s(\Omega)}\leq \norm{\psi_{1,z}}_{L_\infty(\Omega)}\norm{u_1}_{L_s(\Omega)} + \norm{f_1}_{L_s(\Omega)}.
    \label{2.16}
    \end{equation}
    Integrating with respect to time yields
    \begin{multline}\label{2.17}
        \norm{u_1(t)}_{L_s(\Omega)}\\
        \leq\exp\bigg(\int_0^t\norm{\psi_{1,z}(t')}_{L_\infty(\Omega)} \,\ud t'\bigg)\left(\norm{f_1}_{L_1(0,t;L_s(\Omega))}+\norm{u_1(0)}_{L_s(\Omega)}\right).
    \end{multline}
    Passing with $s\to\infty$ we derive \eqref{2.14}. This concludes the proof.
\end{proof}

\begin{lemma}\label{lem2.11}
    Let $\psi_1$ be a solution to
    \begin{equation}\label{eq2.22}
        \left\{
            \begin{aligned}
                -&\Delta\psi_1 - \frac{2}{r}\psi_{1,r} = \omega_1 & &\text{in $\Omega$},\\
                 &\psi_1 = 0 & &\text{on $S$}.
            \end{aligned}
        \right.
    \end{equation}
    Suppose that $\omega_1 \in L_2(\Omega)$. Then, any solution $\psi_1$ to \eqref{eq2.22} satisfies
    \begin{equation}\label{eq2.23}
        \norm{\psi_1}_{2,\Omega} \leq c \abs{\omega_1}_{2,\Omega}.
    \end{equation}
\end{lemma}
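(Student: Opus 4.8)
My plan is to read \eqref{eq2.22} as the radial reduction of an honest Poisson problem, apply elliptic $H^2$-regularity in the lifted variables, and transfer the bound back to the physical norm $\norm{\cdot}_{2,\Omega}$. Writing the operator out, $-\Delta\psi_1-\frac2r\psi_{1,r}=-\psi_{1,rr}-\frac3r\psi_{1,r}-\psi_{1,zz}$, I recognise $-\partial_r^2-\frac3r\partial_r-\partial_z^2$ as the Laplacian in $\R5$ acting on the function $\Psi(y,z):=\psi_1(\abs{y},z)$, $y\in\R4$, which is radial in $y$; equivalently, with $\bm{A}:=r\psi_1\bar e_\varphi=\psi\bar e_\varphi$, the left-hand side of \eqref{eq2.22} is the azimuthal component of $-\Delta\bm{A}$. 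Either way $\psi_1$ solves a Dirichlet problem for a genuine Laplacian, $-\Delta_{\R5}\Psi=\omega_1$ on the five-dimensional cylinder $\{\abs y<R,\ \abs z<a\}$ with $\Psi=0$ on its boundary. The whole difficulty is concentrated at the axis $r=0$, which is an interior point in the lifted picture but the locus of the singular coefficient $\frac2r\psi_{1,r}$ in the original one.

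Away from the axis I expect the estimate to be routine. Fix $r_0\in(0,R)$; on $\{r\ge r_0\}$ the term $\frac2r\psi_{1,r}$ is a smooth first-order perturbation and \eqref{eq2.22} is uniformly elliptic with smooth coefficients, so standard interior and up-to-the-boundary $H^2$-regularity applies. The only geometric point is the edge where $S_1$ meets $S_2$; since it is a right, hence convex, dihedral angle, the Dirichlet Laplacian remains $H^2$-regular there. This yields $\norm{\psi_1}_{H^2(\Omega\cap\{r\ge r_0\})}\le c\big(\abs{\omega_1}_{2,\Omega}+\norm{\psi_1}_{H^1(\Omega)}\big)$, and the lower-order term is controlled by the $H^1$-bound already furnished by Lemma \ref{l2.5}.

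The core of the proof is the neighbourhood of $r=0$, and this is where I expect the main obstacle to lie. The natural energy identity in the lifted variables controls the second derivatives only in the weight $r^3\,\ud r\,\ud z$, which near the axis is strictly weaker than the weight $r\,\ud r\,\ud z$ implicit in $\norm{\cdot}_{2,\Omega}$; conversely, squaring the equation produces the term $\int_\Omega\frac{1}{r^2}\psi_{1,r}^2\,\ud x$, which sits exactly at the borderline for Hardy's inequality. The resolution is a Kondratev-type weighted a~priori estimate for the radial operator $-\partial_r^2-\frac3r\partial_r$, whose indicial roots are $\lambda=0$ and $\lambda=-2$ (since $r^\lambda\mapsto-\lambda(\lambda+2)r^{\lambda-2}$): the exponent matching the physical measure is admissible for the second-order estimate, and the hypothesis $\omega_1\in L_2(\Omega)$ is precisely the corresponding weighted datum, so Hardy's inequality (Lemma \ref{lemH}) may be used to trade the genuinely subcritical lower-order contributions against the top-order ones. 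The delicate mode is $\lambda=0$, i.e. the non-vanishing trace $\psi_1\big\vert_{r=0}$; it is harmless here because it is annihilated by $\psi_{1,rr}$, but it is exactly what becomes critical at the next order, obstructing membership in $H^3_0(\Omega)$ and forcing the subtraction of a singular part in Corollaries \ref{cor1}--\ref{cor2}. Patching the near-axis estimate with the away-from-axis one through a radial partition of unity then gives \eqref{eq2.23}; I would carry out the weighted near-axis estimate along the lines of \cite{NZ} and Kondratev \cite{Ko67}.
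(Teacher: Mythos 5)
Your diagnosis of where the difficulty sits is accurate: away from the axis the problem is a uniformly elliptic Dirichlet problem in a domain with convex edges, and the whole content of the lemma is the near-axis bound, which comes down to controlling the critical term $\int_\Omega r^{-2}\psi_{1,r}^2\,\ud x$ for which the naive Hardy inequality fails (exponent $r=0$ in Lemma \ref{lemH} is excluded). But at exactly that point your argument stops being a proof. You assert that a Kondratev-type weighted estimate for the radial operator supplies the missing bound, and that Hardy's inequality handles ``the genuinely subcritical lower-order contributions'' --- yet the term you need is the one you have just identified as sitting \emph{on} the borderline, so that mechanism cannot deliver it. Moreover, the admissible-weight Kondratev estimate in the class matching the physical measure would control $\int\psi_1^2 r^{-3}\,\ud r\,\ud z$, which the $\lambda=0$ mode $\psi_1\big\vert_{r=0}\neq 0$ violates; so you would first have to subtract the trace (as in Corollaries \ref{cor1}--\ref{cor2}), prove that the trace lies in $H^2(-a,a)$ in $z$, and reassemble --- none of which is carried out. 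As written, the decisive estimate is named, not proved.

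The paper closes this gap with a short, self-contained trick that your proposal does not contain: test the equation $-\psi_{1,rr}-\psi_{1,zz}-\frac{3}{r}\psi_{1,r}=\omega_1$ with $\frac{1}{r}\psi_{1,r}$ and integrate over $\Omega$ (measure $r\,\ud r\,\ud z$). The singular coefficient then works \emph{for} you: the term $\frac{3}{r}\psi_{1,r}$ produces the positive quantity $3\int_\Omega r^{-2}\psi_{1,r}^2\,\ud x$ on the left, while $\psi_{1,rr}\cdot\frac1r\psi_{1,r}\,r\,\ud r\,\ud z$ and $\psi_{1,zz}\cdot\frac1r\psi_{1,r}\,r\,\ud r\,\ud z$ reduce to pure boundary terms of favourable sign, using $\psi_{1,r}\big\vert_{r=0}=0$ from the expansion $\psi_1=a_1+a_3r^2+o(r^3)$ and $\psi_1\big\vert_S=0$. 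Young's inequality on $\int\omega_1\frac1r\psi_{1,r}\,\ud x$ then gives $\int_\Omega r^{-2}\psi_{1,r}^2\,\ud x\le c\abs{\omega_1}_{2,\Omega}^2$ directly, after which the equation is read as $-\Delta\psi_1=\omega_1+\frac2r\psi_{1,r}$ with $L_2$ right-hand side and standard $H^2$ regularity for the Dirichlet--Poisson problem finishes the proof. If you want to rescue your route, you should either reproduce this identity or genuinely execute the trace-subtraction Kondratev argument near the axis; invoking it by name is where your proposal has a hole.
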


\begin{proof}
    We start with rewriting \eqref{eq2.22}$_1$ in the form
    \[
        -\psi_{1,rr} - \psi_{1,zz} - \frac{3}{r}\psi_{1,r} = \omega_1.
    \]
    Multiplying this equality by $\frac{1}{r}\psi_{1,r}$ and integrating over $\Omega$ yields
    \begin{equation}\label{eq2.25}
        3\int_\Omega \abs{\frac{1}{r}\psi_{1,r}}^2\, \ud x = -\int_\Omega \psi_{1,rr} \frac{1}{r} \psi_{1,r}\, \ud x - \int_\Omega \psi_{1,zz} \frac{1}{r} \psi_{1,r}\, \ud x - \int_\Omega \omega_1 \frac{1}{r} \psi_{1,r}\, \ud x.
    \end{equation}
    The first term on the r.h.s. of \eqref{eq2.25} equals
    \[
        -\int_\Omega \psi_{1,rr}\psi_{1,r}\, \ud r \ud z = -\frac{1}{2} \int_\Omega \partial_r \psi_{1,r}^2\, \ud r \ud z = -\frac{1}{2} \int_{-a}^a \psi_{1,r}^2\big\vert_{r = 0}^{r = R}\, \ud z \equiv I_1.
    \]
    Integrating with respect to $z$ in the second term on the r.h.s. of \eqref{eq2.25} yields
    \[
        -\int_\Omega \left(\psi_{1,z}\psi_{1,r}\right)_{,z}\, \ud r \ud z + \int_\Omega \psi_{1,z}\psi_{1,rz}\, \ud r \ud z
    \]
    where the first term vanishes because $\psi_{1,r}\vert_{z \in \{-a,a\}} = 0$ and the second equals
    \[
        I_2 \equiv \frac{1}{2} \int_{-a}^a \psi_{1,z}^2\big\vert_{r = 0}^{r = R}\, \ud z.
    \]
    Using the boundary condition \eqref{eq2.22}$_2$ we obtain
    \[
        I_2 = -\frac{1}{2} \int_{-a}^a \psi_{1,z}^2\big\vert_{r = 0}\, \ud z.
    \]
    From \cite[Remark 4]{NZ1} we have
    \[
        \begin{aligned}
            \psi &= a_1(r,z,t)\big\vert_{r = 0} r + a_3(r,z,t)\big\vert_{r = 0}r^3 + o(r^4),\\
            \psi_1 &= a_1(r,z,t)\big\vert_{r = 0}  + a_3(r,z,t)\big\vert_{r = 0}r^2 + o(r^3),\\
        \end{aligned}
    \]
    thus
    \begin{equation}\label{eq2.26}
        \psi_{1,r}\big\vert_{r = 0} = 0.
    \end{equation}
    Using \eqref{eq2.26} in $I_1$ yields
    \[
        I_1 = -\frac{1}{2} \int_{-a}^a \psi_{1,r}^2\big\vert_{r = R}\, \ud z.
    \]
    Applying the H\"older and Young inequalities to the last term on the r.h.s in \eqref{eq2.25} and combining it with $I_1$ and $I_2$ we obtain
    \begin{equation}\label{eq2.27}
        \frac{1}{2}\int_\Omega \frac{1}{r^2}\psi_{1,r}^2\, \ud x + \frac{1}{2}\int_{-a}^a \psi_{1,r}^2\big\vert_{r = R}\, \ud z + \frac{1}{2} \int_{-a}^a \psi_{1,z}^2\big\vert_{r = 0}\, \ud z \leq c \abs{\omega_1}_{2,\Omega}^2.
    \end{equation}
    Since the last two termns on the l.h.s. are positive we conclude that
    \begin{equation}\label{eq2.28}
        \int_\Omega \frac{1}{r^2} \psi_{1,r}^2\, \ud x \leq c \abs{\omega_1}_{2,\Omega}^2.
    \end{equation}
    Now we can rewrite \eqref{eq2.22} in the form

    \begin{equation}\label{eq2.29}
        \left\{
            \begin{aligned}
                -&\Delta\psi_1 = \omega_1 + \frac{2}{r}\psi_{1,r} & &\text{in $\Omega$},\\
                 &\psi_1 = 0 & &\text{on $S$}
            \end{aligned}
        \right.
    \end{equation}
    and consider it as the Dirichlet problem for the Poisson equation. Thus
    \begin{equation}\label{eq2.30}
        \norm{\psi_1}_{2,\Omega} \leq c \abs{\omega_1}_{2,\Omega},
    \end{equation}
    where \eqref{eq2.28} was used. This ends the proof.
\end{proof}

\begin{lemma}\label{lem2.9n}
    Assume that $s \in (1,\infty)$. Suppose that $f\in L_1(0,t;L_s(\Omega))$ and $u_1(0) \in L_s(\Omega)$. Then
    \begin{equation*}
        \abs{u_1}_{s, \Omega} \leq \exp\left(cs\int_0^t \abs{\omega_1(t')}^2_{2,\Omega}\, \ud t'\right)\left(s\norm{f_1}_{L_1(0,t;L_s(\Omega))} + \norm{u_1(0)}_{L_s(\Omega)}\right).
    \end{equation*}
\end{lemma}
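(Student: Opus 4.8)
The plan is to run an $L_s$-energy estimate for the modified swirl $u_1=v_\varphi/r$ governed by \eqref{1.16}$_1$, testing the equation against $u_1\abs{u_1}^{s-2}$ exactly as in the proof of Lemma \ref{l2.6} (cf. \eqref{2.15}), and to replace the $L_\infty$-control of $\psi_{1,z}$ used there by the weaker $L_3$-control supplied by the elliptic estimate of Lemma \ref{lem2.11}. Multiplying \eqref{1.16}$_1$ by $u_1\abs{u_1}^{s-2}$ and integrating over $\Omega$, the time derivative yields $\frac1s\Dt\norm{u_1}_{L_s(\Omega)}^s$, while the convective term becomes $\frac1s\int_\Omega\bm v\cdot\nabla\abs{u_1}^s\,\ud x$, which vanishes after an integration by parts because $\Div\bm v=0$ and $u_1\big\vert_S=0$.

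For the viscous part, integrating $-\nu\Delta u_1$ by parts (the boundary integral disappears since $u_1\big\vert_S=0$) produces the dissipation $\frac{4\nu(s-1)}{s^2}\norm{\nabla\abs{u_1}^{s/2}}_{L_2(\Omega)}^2$. The remaining term $-\frac{2\nu}{r}u_{1,r}$ rewrites as $-\frac{2\nu}{s}\int_\Omega\frac1r\partial_r\abs{u_1}^s\,\ud x$; against the measure $r\,\ud r\,\ud z$ the weight cancels and only boundary terms at $r=R$ and $r=0$ remain. The one at $r=R$ vanishes by $u_1\big\vert_S=0$, and the one at $r=0$ equals $\frac{4\pi\nu}{s}\int_{-a}^a\abs{u_1(0,z)}^s\,\ud z\ge0$, sits on the left-hand side, and is therefore simply discarded.

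The heart of the matter is the term $2\int_\Omega\psi_{1,z}\abs{u_1}^s\,\ud x$. Writing $w=\abs{u_1}^{s/2}$ (so $w\big\vert_S=0$ and $\norm{w}_{L_2(\Omega)}^2=\norm{u_1}_{L_s(\Omega)}^s$), I would estimate by H\"older with exponents $(3,\tfrac32)$, by interpolation of $L_3$ between $L_2$ and $L_6$, and by the Sobolev embedding $H^1_0(\Omega)\hookrightarrow L_6(\Omega)$,
\begin{equation*}
    2\abs{\int_\Omega\psi_{1,z}w^2\,\ud x}\le 2\norm{\psi_{1,z}}_{L_3(\Omega)}\norm{w}_{L_3(\Omega)}^2\le c\norm{\psi_{1,z}}_{L_3(\Omega)}\norm{w}_{L_2(\Omega)}\norm{\nabla w}_{L_2(\Omega)}.
\end{equation*}
Here Lemma \ref{lem2.11} together with $H^2(\Omega)\hookrightarrow W^1_6(\Omega)\hookrightarrow W^1_3(\Omega)$ gives $\norm{\psi_{1,z}}_{L_3(\Omega)}\le c\norm{\psi_1}_{2,\Omega}\le c\abs{\omega_1}_{2,\Omega}$, which is exactly the point at which the stream-function estimate enters. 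A Young inequality then absorbs $\norm{\nabla w}_{L_2(\Omega)}^2$ into half of the dissipation, at the cost of a term $c\frac{s^2}{s-1}\abs{\omega_1}_{2,\Omega}^2\norm{u_1}_{L_s(\Omega)}^s$ on the right; the $f_1$-term is handled by H\"older, $\int_\Omega f_1u_1\abs{u_1}^{s-2}\,\ud x\le\norm{f_1}_{L_s(\Omega)}\norm{u_1}_{L_s(\Omega)}^{s-1}$.

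Collecting the above, multiplying by $s$, discarding the nonnegative dissipation and axis terms, and dividing by $s\norm{u_1}_{L_s(\Omega)}^{s-1}$ (the degenerate case treated by regularising $\abs{u_1}^{s-2}$), I obtain the differential inequality
\begin{equation*}
    \Dt\norm{u_1}_{L_s(\Omega)}\le c\frac{s^2}{s-1}\abs{\omega_1}_{2,\Omega}^2\norm{u_1}_{L_s(\Omega)}+\norm{f_1}_{L_s(\Omega)},
\end{equation*}
and Grönwall's inequality in $t$ yields the asserted bound, since $\frac{s^2}{s-1}\le cs$ for $s$ bounded away from $1$ and $s\ge1$ allows replacing the coefficient $1$ of $\norm{f_1}$ by $s$. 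The only nonroutine step is the control of $\int_\Omega\psi_{1,z}\abs{u_1}^s\,\ud x$: one must trade the $L_\infty$-bound on $\psi_{1,z}$ of Lemma \ref{l2.6} for the $L_3$-bound from Lemma \ref{lem2.11} and absorb the gradient into the dissipation, the delicate point being to keep the $s$-dependence of the resulting constant linear, which forces the choice of the exponent $3$ (equivalently the interpolation parameter $\tfrac12$) so as to reproduce precisely the weight $\abs{\omega_1}_{2,\Omega}^2$ in the exponential.
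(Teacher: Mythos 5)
Your proof is correct and follows the same skeleton as the paper's: test \eqref{1.16}$_1$ with $u_1\abs{u_1}^{s-2}$, absorb the dangerous term into the dissipation by Young, invoke Lemma \ref{lem2.11} to convert the stream-function norm into $\abs{\omega_1}_{2,\Omega}$, and conclude by Gr\"onwall. The only place where you genuinely diverge is the treatment of $2\int_\Omega\psi_{1,z}\abs{u_1}^s\,\ud x$. The paper first integrates by parts in $z$ (using $\psi_1\big\vert_S=0$), so that only $\abs{\psi_1}_{\infty,\Omega}$ is needed; this is bounded by $c\norm{\psi_1}_{2,\Omega}\le c\abs{\omega_1}_{2,\Omega}$ via the embedding $H^2(\Omega)\hookrightarrow L_\infty(\Omega)$, and the Young step only has to absorb $\abs{\partial_z u_1^{s/2}}_{2,\Omega}^2$, i.e.\ a single component of the dissipation. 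You instead keep $\psi_{1,z}$, pay with $\norm{\psi_{1,z}}_{L_3(\Omega)}\le c\norm{\psi_1}_{2,\Omega}$ and a Gagliardo--Nirenberg/Sobolev interpolation on $w=\abs{u_1}^{s/2}$, and absorb the full $\norm{\nabla w}_{L_2(\Omega)}^2$. Both routes land on the same differential inequality with the same $\abs{\omega_1}_{2,\Omega}^2$ weight and the same $cs$ coefficient (and both share the caveat that $\tfrac{s^2}{s-1}\le cs$ really requires $s$ bounded away from $1$, a point you state more honestly than the paper does). Your bookkeeping of the axis term coming from $-\tfrac{2\nu}{r}u_{1,r}$ is also consistent with the paper, which silently discards it.
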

\begin{proof}
    In \eqref{2.15} we integrate by parts, use the boundary conditions \eqref{1.14}$_4$ and apply the H\"older and Young inequalities
    \begin{multline}\label{5.8}
        \frac{1}{s}\Dt\abs{u_1}_{s,\Omega}^s+{4(s-1)\nu\over s^2}\int_\Omega \abs{\nabla|u_1|^{s/2}}^2\,\ud x \\
        \leq\epsilon\abs{\partial_zu_1^{s/2}}_{2,\Omega}^2 + \frac{c}{\varepsilon} \abs{\psi_1}_{\infty,\Omega}^2\abs{u_1}_{s,\Omega}^s + \abs{f_1}_{s,\Omega}\abs{u_1}_{s,\Omega}^{s-1}.
    \end{multline}
    For sufficiently small $\varepsilon$ we get
    \begin{equation}
        \frac{1}{s}\Dt\abs{u_1}_{s,\Omega}^s\leq cs\abs{\psi_1}_{\infty,\Omega}^2\abs{u_1}_{s,\Omega}^s + \abs{f_1}_{s,\Omega}\abs{u_1}_{s,\Omega}^{s-1}.
        \label{5.9}
    \end{equation}
    Hence, we have
    \[
        \Dt\abs{u_1}_{s,\Omega}\leq cs \abs{\psi_1}_{\infty,\Omega}^2\abs{u_1}_{s,\Omega} + \abs{f_1}_{s,\Omega}.
    \]
    Since $\epsilon = \frac{2(s-1)\nu}{s^2}$, then $\frac{c}{\epsilon} = \frac{cs^2}{2(s-1)\nu} \leq cs$. Integrating with respect to time yields
    \begin{equation}
        \abs{u_1}_{s,\Omega}\leq \exp\left(cs\int_0^t\abs{\psi_1(t')}_{\infty,\Omega}^2\,\ud t'\right)\left( \abs{u_1(0)}_{s,\Omega} + \abs{f_1}_{s,1,\Omega^t}\right).
        \label{2.20}
    \end{equation}
    Using Lemma \ref{lem2.11}
    \[
        \abs{\psi_1}_{\infty,\Omega}\le c\norm{\psi_1}_{2,\Omega}\le c\abs{\omega_1}_{2,\Omega}
    \]
    we obtain
    \begin{equation}\label{2.21}
        \abs{u_1}_{s,\Omega}\le\exp\left(cs\int_0^t\abs{\omega_1(t')}_{2,\Omega}^2\,\ud t'\right)\left( \abs{u_1(0)}_{s,\Omega} + \abs{f_1}_{s,1,\Omega^t}\right).
    \end{equation}
    This concludes the proof.
\end{proof}

\begin{corollary}[Theorem 1.3 in \cite{NZ}]\label{cor1}
    Suppose that $\psi_1$ is a weak solution to \eqref{1.16}$_{3,4}$. Let $\omega_1\in L_2(\Omega)$ and introduce
    \[
        \chi(r,z) = \int_0^r\psi_{1,\tau}(1+K(\tau))\,\ud\tau,
    \]
    where $K(\tau)$ is any smooth function with a compact support such that
    \[
        \lim_{r\to 0^+} \frac{K(r)}{r^2} = c_0<\infty.
    \]
    Then
    \begin{equation*}
        \norm{\psi_1 - \psi_1(0) - \chi}_{L_2(-a,a;H_0^2(0,R))}^2 + \norm{\psi_{1,zr}}_{L_2(\Omega)}^2 \\
        +\norm{\psi_{1,zz}}_{L_2(\Omega)}^2 \leq c\norm{\omega_1}_{L_2(\Omega)}^2,
    \end{equation*}
\end{corollary}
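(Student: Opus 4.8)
The plan is to estimate the three groups of terms on the left of the claimed inequality separately: the mixed and pure $z$-second derivatives by a direct energy test on \eqref{1.16}$_3$, and the corrected radial profile by combining an elliptic bootstrap with Hardy's inequality.

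First I would treat $\psi_{1,zr}$ and $\psi_{1,zz}$. Writing \eqref{1.16}$_3$ as $-\psi_{1,rr}-\frac{3}{r}\psi_{1,r}-\psi_{1,zz}=\omega_1$, I would multiply by $-\psi_{1,zz}$ and integrate over $\Omega$ in the axisymmetric measure $r\,\ud r\,\ud z$. Integrating the Laplacian part by parts first in $z$ and then in $r$, the boundary contributions at $z=\pm a$ vanish because $\psi_1\equiv 0$ on $S_2$ (so $\psi_{1,r},\psi_{1,rr}$ vanish there), the contribution at $r=R$ vanishes because $\psi_{1,z}\vert_{r=R}=0$, and the endpoint at $r=0$ is killed by the factor $r$ together with the near-axis regularity of $\psi_1$. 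The singular term $3\int\psi_{1,r}\psi_{1,zz}\,\ud r\,\ud z$ combines with the cross term generated by $\psi_{1,rr}$ to produce the nonnegative boundary integral $\int_{-a}^a\psi_{1,z}^2(0,z)\,\ud z$, and one is left with
\begin{equation*}
\norm{\psi_{1,rz}}_{L_2(\Omega)}^2+\norm{\psi_{1,zz}}_{L_2(\Omega)}^2+\int_{-a}^a\psi_{1,z}^2(0,z)\,\ud z=-\int_\Omega\omega_1\psi_{1,zz}\,\ud x.
\end{equation*}
The Cauchy--Schwarz and Young inequalities on the right absorb $\psi_{1,zz}$ and yield $\norm{\psi_{1,rz}}_{L_2(\Omega)}^2+\norm{\psi_{1,zz}}_{L_2(\Omega)}^2\le c\norm{\omega_1}_{L_2(\Omega)}^2$.

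For the corrected profile set $w=\psi_1-\psi_1(0)-\chi$. With $\chi$ as defined the correction telescopes, $w=-\int_0^r\psi_{1,\tau}K(\tau)\,\ud\tau$, whence $w_{,r}=-K\psi_{1,r}$ and $w_{,rr}=-K'\psi_{1,r}-K\psi_{1,rr}$, and in particular $w(0,z)=w_{,r}(0,z)=0$. The purpose of the weight $K$ (with $K(r)\sim c_0r^2$, so $K(0)=K'(0)=0$, $K^2\le cr$ and $(K')^2r\le c$ on $(0,R)$, and $\supp K$ compact in $[0,R)$) is precisely to enforce enough vanishing of $w$ at the axis for the weighted norm to be finite. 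To bound the top-order term $\int_{-a}^a\int_0^R w_{,rr}^2\,\ud r\,\ud z$ I would first control $\int_\Omega\psi_{1,rr}^2\,\ud x$ by reading $\psi_{1,rr}=-\frac{3}{r}\psi_{1,r}-\psi_{1,zz}-\omega_1$ off the equation and invoking \eqref{eq2.28} (which gives $\int_\Omega r^{-2}\psi_{1,r}^2\,\ud x\le c\norm{\omega_1}_{L_2(\Omega)}^2$) together with the bound on $\psi_{1,zz}$ just obtained. Then $\int(K\psi_{1,rr})^2\,\ud r\,\ud z\le c\int\psi_{1,rr}^2r\,\ud r\,\ud z$ by $K^2\le cr$, and $\int(K'\psi_{1,r})^2\,\ud r\,\ud z\le c\int r^{-1}\psi_{1,r}^2\,\ud r\,\ud z$ by $(K')^2r\le c$; both are $\le c\norm{\omega_1}_{L_2(\Omega)}^2$.

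Finally, the lower-order weighted terms of the $H^2_0(0,R)$-norm, i.e. the weighted $L_2$-norms of $w_{,r}$ and $w$, follow from the top-order term by Hardy's inequality (Lemma \ref{lemH}), which applies precisely because $w_{,r}(0,z)=0$ and $w(0,z)=0$; integrating in $z$ gives $\norm{w}_{L_2(-a,a;H^2_0(0,R))}^2\le c\norm{\omega_1}_{L_2(\Omega)}^2$, and combining with the first step proves the claim. The main obstacle, I expect, is the rigorous handling of the axis terms in the first step: justifying that $\psi_{1,z}^2(0,z)$ is well defined, that $\psi_{1,r}\vert_{r=0}=0$ (cf. \eqref{eq2.26}), and that the $r$-weighted endpoint contributions vanish all rely on the near-axis expansion of $\psi_1$ behind Lemma \ref{lem2.11} and \cite{NZ1}; strictly, the energy identities should be derived on a smooth approximation and passed to the limit. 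This delicate near-axis analysis is exactly what is packaged in \cite{NZ}, from which the statement may alternatively be quoted verbatim.
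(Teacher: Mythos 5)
The paper offers no proof of this corollary at all: it is imported verbatim as Theorem~1.3 of the companion manuscript \cite{NZ}, so there is no in-paper argument to compare against, and your reconstruction has to be judged on its own merits. As far as I can check against the ingredients that are available in the paper, it is essentially sound. Testing $-\psi_{1,rr}-\tfrac{3}{r}\psi_{1,r}-\psi_{1,zz}=\omega_1$ with $-\psi_{1,zz}$ does give
\begin{equation*}
    \norm{\psi_{1,rz}}_{L_2(\Omega)}^2+\norm{\psi_{1,zz}}_{L_2(\Omega)}^2+\int_{-a}^a\psi_{1,z}^2\big\vert_{r=0}\,\ud z=-\int_\Omega\omega_1\psi_{1,zz}\,\ud x,
\end{equation*}
since the axis contributions $-\tfrac12$ (from $\psi_{1,rr}$) and $+\tfrac32$ (from $\tfrac{3}{r}\psi_{1,r}$) combine to the nonnegative coefficient $+1$; and the telescoping $w=\psi_1-\psi_1(0)-\chi=-\int_0^rK\psi_{1,\tau}\,\ud\tau$, the pointwise identity $\psi_{1,rr}=-\tfrac{3}{r}\psi_{1,r}-\psi_{1,zz}-\omega_1$, estimate \eqref{eq2.28}, and Hardy's inequality do control the corrected profile. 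Two points need more care than you give them. First, $H^2_0(0,R)$ here is the \emph{weighted} space of Section~\ref{s2}, so the three terms of the inner norm carry the measures $r\,\ud r$, $r^{-1}\,\ud r$ and $r^{-3}\,\ud r$, not the unweighted ones you write; your bounds $K^2\le cr$, $(K')^2r\le c$ and the Hardy step still close the weighted version, but the $r^{-3}$-weighted zero-order term is exactly where $K(r)\sim c_0r^2$ and $\psi_{1,r}\vert_{r=0}=0$ are both indispensable, and that should be made explicit. Second, testing a weak solution with $\psi_{1,zz}$ and discarding the endpoint terms at $r=0$ presupposes near-axis regularity a weak solution does not a priori possess; you flag this honestly, and that limiting argument is precisely what is delegated to \cite{NZ}. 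With those caveats your route is a plausible stand-alone proof, whereas the paper simply buys the statement by citation.
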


\begin{corollary}[Theorem 1.4 in \cite{NZ}]\label{cor2}
    Let $\psi_1$ be a weak solution to \eqref{1.14}$_{3,4}$. Let $\omega_1\in H^1(\Omega)$. Then
    \begin{multline*}
        \int_{\mathbb{R}}\norm{\psi_1 - \psi_1(0) - \eta}_{H_0^3(\mathbb{R}_+)}^2\, \ud z + \int_{\mathbb{R}}\int_{\mathbb{R}_+} \left(\abs{\psi_{1,zzz}}^2 + \abs{\psi_{1,zzr}}^2 + \abs{\psi_{1,zz}}^2\right)\, r\ud r \ud z \\
        \leq c\norm{\omega_1}_{H^1(\Omega)}^2,
    \end{multline*}
    where
    \[
        \eta(r,z) = - \int_0^r(r-\tau)\bigg({3\over r}\psi_{1,\tau}+\psi_{1,zz}+\omega_1\bigg) (1+K(\tau))\,\ud\tau
    \]
    and $K$ is the same as in Corollary \ref{cor1}.
\end{corollary}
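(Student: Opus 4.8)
The estimate sharpens the second-order bounds of Lemma \ref{lem2.11} and Corollary \ref{cor1} by one derivative, at the cost of the stronger hypothesis $\omega_1 \in H^1(\Omega)$. The plan is to split the work according to the direction of differentiation: the terms carrying at least two $z$-derivatives are obtained almost for free by differentiating the equation in $z$ and invoking Corollary \ref{cor1}, whereas the genuinely delicate radial regularity near $r = 0$ is recovered by the Hardy inequality together with a Kondratev-type subtraction of the singular part, embodied in the corrector $\eta$.

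First I would differentiate \eqref{eq2.22}$_1$ with respect to $z$. As the coefficients of the operator $-\Delta - \frac{2}{r}\partial_r$ depend on $r$ only, $z$-differentiation commutes with it and
\[
    -\Delta\psi_{1,z} - \frac{2}{r}(\psi_{1,z})_{,r} = \omega_{1,z} \qquad\text{in }\Omega,
\]
so $\psi_{1,z}$ solves the same degenerate problem with datum $\omega_{1,z}$. Since $\omega_1 \in H^1(\Omega)$ gives $\omega_{1,z} \in L_2(\Omega)$, Corollary \ref{cor1} applied to $\psi_{1,z}$ controls $\norm{\psi_{1,zzr}}_{L_2(\Omega)}^2 + \norm{\psi_{1,zzz}}_{L_2(\Omega)}^2 \le c\norm{\omega_1}_{H^1(\Omega)}^2$, while the same corollary applied to $\psi_1$ itself bounds the remaining term $\norm{\psi_{1,zz}}_{L_2(\Omega)}^2$. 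This disposes of the whole second integral in the assertion.

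It remains to control the weighted $H_0^3(\mathbb{R}_+)$ norm of $w := \psi_1 - \psi_1(0) - \eta$, where $\psi_1(0) = \psi_1\big\vert_{r=0}$. Rewriting the equation as in Lemma \ref{lem2.11}, $-\psi_{1,rr} = \frac{3}{r}\psi_{1,r} + \psi_{1,zz} + \omega_1$, one recognises the bracket in the definition of $\eta$ as $-\psi_{1,rr}$; hence, up to the cut-off $1 + K$, $\eta$ is the second-order radial Taylor remainder $\int_0^r (r - \tau)\psi_{1,\tau\tau}\,\ud\tau$ of $\psi_1$. Because $\psi_{1,r}\big\vert_{r=0} = 0$ by \eqref{eq2.26}, subtracting $\psi_1(0)$ and $\eta$ annihilates the constant and the $O(r^2)$ terms in the expansion of $\psi_1$ from \cite{NZ1}, so that $w$ decays fast enough at $r = 0$ to lie in $H_0^3$. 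To bound $w_{,rrr}$ I would differentiate the rewritten equation once more in $r$,
\[
    -\psi_{1,rrr} = -\frac{3}{r^2}\psi_{1,r} + \frac{3}{r}\psi_{1,rr} + \psi_{1,zzr} + \omega_{1,r},
\]
and estimate the two singular terms $r^{-2}\psi_{1,r}$ and $r^{-1}\psi_{1,rr}$ by the Hardy inequality (Lemma \ref{lemH}), feeding in the already-established bound $\int_\Omega r^{-2}\psi_{1,r}^2\,\ud x \le c\abs{\omega_1}_{2,\Omega}^2$ from \eqref{eq2.28} together with $\omega_{1,r}, \psi_{1,zzr} \in L_2(\Omega)$.

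The main obstacle is precisely this third radial derivative near $r = 0$: the operator degenerates there, $\psi_1$ is only $O(1)$, and a direct estimate of $\psi_{1,rrr}$ diverges. The role of the weight $1 + K$, tuned through $K(r)/r^2 \to c_0$ to the asymptotics of $\psi_1$, is exactly to cancel the obstruction to $H_0^3$-membership before Hardy's inequality is applied; making these cancellations match the expansion from \cite{NZ1} to the required order, while keeping $\eta$ itself controlled (note it contains $\psi_{1,zz}$ and $\omega_1$, both handled by $\omega_1 \in H^1(\Omega)$), is the crux. Assembling the $z$-estimate of the second paragraph with this weighted radial estimate and absorbing the corrector then yields the claimed inequality.
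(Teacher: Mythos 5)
You should first be aware that the paper does not prove this statement at all: it is imported verbatim as Theorem 1.4 of \cite{NZ}, a separate manuscript, so there is no internal argument to measure your proposal against. Judged on its own terms, your sketch correctly identifies the two halves of the estimate and essentially settles the easier one: since the coefficients of $-\Delta-\frac{2}{r}\partial_r$ depend on $r$ only, $\psi_{1,z}$ solves the same degenerate problem with datum $\omega_{1,z}\in L_2(\Omega)$, and Corollary \ref{cor1} applied to $\psi_{1,z}$ and to $\psi_1$ controls the second integral in the assertion. One point you pass over there: $\psi_{1,z}$ inherits the Dirichlet condition on $S_1$ (where $\psi_1=0$ for all $z$) but not on $S_2$, so the boundary terms at $z=\pm a$ produced in the weak formulation for $\psi_{1,z}$ must be handled — which is presumably why the cited theorem is stated on $\mathbb{R}\times\mathbb{R}_+$ rather than on $\Omega$.

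The genuine gap is in the radial half, which is the entire content of the theorem. The first term of the asserted inequality is $\int_{\mathbb{R}}\norm{w}_{H^3_0(\mathbb{R}_+)}^2\,\ud z$ for $w=\psi_1-\psi_1(0)-\eta$, and by the definition of $H^k_\mu$ in Section \ref{s2} this contains in particular $\int\abs{w}^2r^{-6}\,r\ud r\ud z$ and $\int\abs{w_{,r}}^2r^{-4}\,r\ud r\ud z$. You correctly recognise that the bracket in $\eta$ is $-\psi_{1,rr}$ (so that, up to the cut-off $1+K$, $\eta$ is the second-order radial Taylor remainder of $\psi_1$), that the subtraction must kill the expansion of $\psi_1$ through order $r^2$, and that Hardy's inequality is the tool; but you never derive the bound by $c\norm{\omega_1}_{H^1(\Omega)}^2$. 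The third radial derivative of $w$ involves $\omega_{1,r}$, the singular combinations $\frac{3}{r^2}\psi_{1,r}-\frac{3}{r}\psi_{1,rr}$, and all the terms generated by differentiating $K$; the cancellations supposedly enforced by $K(r)/r^2\to c_0$ are described, not verified, and \eqref{eq2.28} alone does not control $\int\abs{\psi_{1,rr}}^2r^{-2}\,r\ud r\ud z$ or the lower-order weighted terms. As it stands the proposal is a plausible roadmap to the result of \cite{NZ}, not a proof of it.
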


\section{Estimate for $\omega_1$}\label{s3}

\begin{lemma}\label{l3.1}
    Assume that $\omega_1(0)\in L_2(\Omega)$, $u_1\in L_4(\Omega^t)$, $F\in L_{6/5,2}(\Omega^t)$, $t\le T$. Then the following inequality holds
    \begin{multline}\label{3.1}
        \frac{1}{2}\int_\Omega\omega_1^2\,\ud x + \frac{\nu}{2}\int_{\Omega^t}\abs{\nabla\omega_1}^2\,\ud x\ud t' + \nu \int_0^t\int_{-a}^a\omega_1^2\big\vert_{r=0}\, \ud z \ud t'\\
        \leq\frac{1}{\nu}\int_{\Omega^t}u_1^4\,\ud x\ud t'+c\abs{F_1}_{6/5,2,\Omega^t}^2+\int_\Omega \omega_1^2(0)\,\ud x.
    \end{multline}
\end{lemma}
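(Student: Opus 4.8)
The plan is to run the standard $L_2$ energy argument on the vorticity equation \eqref{1.16}$_2$, paying close attention to the weight $r$ in the cylindrical volume element and to the non-standard diffusion operator $\Delta+\frac{2}{r}\partial_r$. First I would multiply \eqref{1.16}$_2$ by $\omega_1$ and integrate over $\Omega$ (writing $\ud x = r\,\ud r\,\ud z$ after the trivial integration in $\varphi$, the $2\pi$ factor being absorbed into the generic constant $c$). The time-derivative term yields $\frac12\Dt\abs{\omega_1}_{2,\Omega}^2$, and the convective term $\int_\Omega(\bm v\cdot\nabla\omega_1)\omega_1\,\ud x$ vanishes: rewriting it as $\frac12\int_\Omega(v_r\partial_r+v_z\partial_z)(\omega_1^2)\,r\,\ud r\,\ud z$ and integrating by parts, the divergence $(rv_r)_{,r}+(rv_z)_{,z}$ is zero by \eqref{1.11}$_4$, while the boundary contributions drop out because $v_r=0$ on $S_1$, $v_z=0$ on $S_2$, and the factor $r$ kills the flux through the axis. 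The hypotheses $u_1\in L_4(\Omega^t)$, $F\in L_{6/5,2}(\Omega^t)$, $\omega_1(0)\in L_2(\Omega)$ guarantee finiteness of the terms to follow.

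The delicate step is the diffusion term. Writing $\Delta\omega_1+\frac{2}{r}\omega_{1,r}=\omega_{1,rr}+\frac{3}{r}\omega_{1,r}+\omega_{1,zz}$ and integrating $-\nu(\cdots)\omega_1\,r\,\ud r\,\ud z$ by parts in $r$ and $z$ separately, the $z$-part gives $\nu\int\omega_{1,z}^2\,r\,\ud r\,\ud z$ with no boundary term (since $\omega_1=0$ on $S_2$). The $r$-part is the crux: integrating $\omega_{1,rr}$ by parts produces $\nu\int\omega_{1,r}^2\,r\,\ud r\,\ud z$ together with a leftover $\nu\int\omega_{1,r}\omega_1\,\ud r\,\ud z$, while the term $\frac{3}{r}\omega_{1,r}$ contributes $-\frac{3\nu}{2}\int\partial_r(\omega_1^2)\,\ud r\,\ud z$. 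Using $\omega_1=0$ on $S_1$ to discard the $r=R$ endpoints, and the weight $r$ to discard the $r=0$ endpoint of the $\omega_{1,rr}$ boundary term, the two surviving axis contributions combine into exactly $+\nu\int_{-a}^a\omega_1^2\big\vert_{r=0}\,\ud z$. Hence the whole diffusion term equals $\nu\int_\Omega\abs{\nabla\omega_1}^2\,\ud x+\nu\int_{-a}^a\omega_1^2\big\vert_{r=0}\,\ud z$, which is precisely the positive left-hand side of \eqref{3.1}.

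It remains to bound the right-hand side $\int_\Omega(2u_1u_{1,z}+F_1)\omega_1\,\ud x$. For the nonlinear term I would write $2u_1u_{1,z}=(u_1^2)_{,z}$ and integrate by parts in $z$ (the boundary term vanishing because $\omega_1=0$ on $S_2$), obtaining $-\int_\Omega u_1^2\omega_{1,z}\,\ud x$; Young's inequality then splits this as $\frac{\nu}{4}\int_\Omega\abs{\nabla\omega_1}^2\,\ud x+\frac1\nu\int_\Omega u_1^4\,\ud x$. For the forcing term, Hölder in the conjugate pair $(6/5,6)$ gives $\abs{F_1}_{6/5,\Omega}\abs{\omega_1}_{6,\Omega}$; since $\omega_1$ vanishes on $S$, the embedding $H^1(\Omega)\hookrightarrow L_6(\Omega)$ together with the Poincaré inequality yields $\abs{\omega_1}_{6,\Omega}\le c\abs{\nabla\omega_1}_{2,\Omega}$, and a further Young step produces $\frac{\nu}{4}\int_\Omega\abs{\nabla\omega_1}^2\,\ud x+\frac{c}{\nu}\abs{F_1}_{6/5,\Omega}^2$. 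Integrating in time from $0$ to $t$, the two $\frac{\nu}{4}$-gradient terms are absorbed into the $\nu\int_{\Omega^t}\abs{\nabla\omega_1}^2$ coming from the diffusion term, leaving the coefficient $\frac{\nu}{2}$, and the time integral of $\abs{F_1}_{6/5,\Omega}^2$ is exactly $\abs{F_1}_{6/5,2,\Omega^t}^2$. This yields \eqref{3.1}, with room to spare on the initial-data term, whose natural coefficient is $\frac12$ rather than $1$.

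The main obstacle is the diffusion term at the axis: one must justify the integration by parts near $r=0$ — in particular that $r\,\omega_{1,r}\omega_1\to0$ as $r\to0^+$ — and track the algebra so that the $\omega_{1,rr}$ leftover and the $\frac{3}{r}\omega_{1,r}$ contribution combine with the correct sign into $+\nu\int_{-a}^a\omega_1^2\big\vert_{r=0}\,\ud z$ rather than cancelling or producing a term of the wrong sign. Everything else is routine Hölder–Young–Poincaré bookkeeping.
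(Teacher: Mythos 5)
Your proposal is correct and follows exactly the route the paper takes (the paper's proof is the one-line instruction ``multiply \eqref{1.16}$_2$ by $\omega_1$, integrate over $\Omega$, integrate by parts, then integrate in time''); you have simply supplied the details, and your bookkeeping of the diffusion term --- in particular the combination of the $\omega_{1,rr}$ leftover with the $\frac{3}{r}\omega_{1,r}$ contribution into the positive axis term $\nu\int_{-a}^a\omega_1^2\big\vert_{r=0}\,\ud z$ --- as well as the Young/H\"older/Poincar\'e treatment of $2u_1u_{1,z}$ and $F_1$, reproduces \eqref{3.1} with the stated constants.
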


\begin{proof}
    Multiply \eqref{1.16}$_2$ by $\omega_1$, integrate over $\Omega$, integrate by parts. Next, integration with respect to time implies \eqref{3.1}. This ends the proof.
\end{proof}

\section{Estimate for the angular component of velocity}\label{s4}

Consider problem \eqref{1.6}

\begin{lemma}\label{l4.1}
    Assume that $f_\varphi\in L_2(\Omega^t)$, $v_\varphi(0)\in L_{4,-1/2}(\Omega)$,
    \begin{equation}
        \abs{\int_{\Omega^t}{v_r\over r}{v_\varphi^4\over r^2}\,\ud x\ud t'}<\infty.
    \label{4.1}
    \end{equation}
    Then, any solution to \eqref{1.6} satisfy
    \begin{multline}\label{4.2}
        \frac{1}{4}\int_\Omega\frac{v_\varphi^4}{r^2}\, \ud x + \frac{3\nu}{4}\int_{\Omega^t}\abs{\nabla{\frac{v_\varphi^2}{r}}}^2\,\ud x \ud t' + \frac{\nu}{2}\int_{\Omega^t}\abs{\frac{v_\varphi}{r}}^4\, \ud x \ud t' \\
        \leq \frac{3}{2}\int_{\Omega^t}\frac{v_r}{r}\frac{v_\varphi^4}{r^2}\, \ud x \ud t' + \frac{27}{4\nu^3}\int_{\Omega^t}f_\varphi^4r^4\, \ud x \ud t' + \frac{1}{4}\int_\Omega\frac{v_\varphi^4(0)}{r^2}\, \ud x.
    \end{multline}
\end{lemma}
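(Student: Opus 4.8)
The plan is to derive a weighted $L^4$-type energy estimate for $v_\varphi$ by testing the equation for $v_\varphi$ against the weight $v_\varphi^3/r^2$. I start from the angular momentum equation \eqref{1.6}$_1$,
\[
    v_{\varphi,t} + \bm v\cdot\nabla v_\varphi - \nu\left(\Delta - \tfrac{1}{r^2}\right)v_\varphi + \tfrac{1}{r}v_r v_\varphi = f_\varphi,
\]
and multiply it by $v_\varphi^3/r^2$, then integrate over $\Omega$. The time derivative term produces $\tfrac14\Dt\int_\Omega v_\varphi^4/r^2\,\ud x$, giving the first term on the left of \eqref{4.2} after integration in time. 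The convective term $\int_\Omega (\bm v\cdot\nabla v_\varphi)\,v_\varphi^3/r^{2}\,\ud x$ should be rewritten as $\tfrac14\int_\Omega \bm v\cdot\nabla(v_\varphi^4)\,r^{-2}\,\ud x$; integrating by parts and using $\Div\bm v=0$ together with the no-flux boundary condition $\bm v\cdot\bar n|_S=0$ moves the derivative onto the weight $r^{-2}$, leaving a term proportional to $\int_\Omega \tfrac{v_r}{r}\tfrac{v_\varphi^4}{r^2}\,\ud x$. This is exactly the source of the first term on the right-hand side of \eqref{4.2}, and explains why assumption \eqref{4.1} is imposed: it guarantees this borderline-singular integral is finite.

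Next I would treat the dissipative block. Writing $g:=v_\varphi^2/r$, the key algebraic identity is that the combination $-\nu(\Delta-\tfrac{1}{r^2})v_\varphi$ tested against $v_\varphi^3/r^2$ reproduces, after integrating by parts in $r$ and $z$, the gradient term $\int_\Omega|\nabla g|^2\,\ud x$ together with the zeroth-order contribution $\int_\Omega|v_\varphi/r|^4\,\ud x$ coming from the $-1/r^2$ potential and from the extra radial weight. The natural constants $\tfrac{3\nu}{4}$ and $\tfrac{\nu}{2}$ in \eqref{4.2} indicate that the raw coefficients are split: part of the full Dirichlet energy is consumed in controlling the lower-order cross terms generated by the Bochner-type expansion of $|\nabla(v_\varphi^2/r)|^2$ versus $v_\varphi^2|\nabla v_\varphi|^2/r^2$, and what survives on the left is the stated fraction. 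The boundary terms produced here vanish by $v_\varphi|_S=0$ (hence $g|_S=0$) together with the regularity $v_\varphi/r\sim O(1)$ near the axis, so no axis contribution appears.

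For the forcing term I would estimate $\int_\Omega f_\varphi\,v_\varphi^3/r^2\,\ud x = \int_\Omega (f_\varphi r)\,(v_\varphi^2/r)\,(v_\varphi/r^2)\,\ud x$ and apply Hölder followed by Young's inequality, choosing the split so that one factor is absorbed into the dissipative term $\tfrac{\nu}{2}\int|v_\varphi/r|^4$ on the left; the residual is the weighted quartic forcing $\tfrac{27}{4\nu^3}\int_\Omega f_\varphi^4 r^4\,\ud x$, where the constant $\tfrac{27}{4\nu^3}$ is precisely what Young's inequality with exponents $(4,4/3)$ yields. Finally I integrate in time from $0$ to $t$, producing the initial-data term $\tfrac14\int_\Omega v_\varphi^4(0)/r^2\,\ud x$ (finite by the assumption $v_\varphi(0)\in L_{4,-1/2}(\Omega)$) and arriving at \eqref{4.2}.

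I expect the main obstacle to be the careful bookkeeping of the dissipative and convective terms near the axis $r=0$: the weight $r^{-2}$ is genuinely singular, so every integration by parts generates a candidate boundary term at $r=0$ and an extra radial-derivative-of-weight term, and one must verify both that these boundary contributions vanish (using $\psi_{1,r}|_{r=0}=0$ and $v_\varphi/r\sim O(1)$ in the spirit of \eqref{eq2.26} and Lemma \ref{lem2.11}) and that the surviving lower-order terms recombine with the correct signs to leave the clean coefficients $\tfrac{3\nu}{4}$ and $\tfrac{\nu}{2}$. Getting these constants and the sign of the $\int \tfrac{v_r}{r}\tfrac{v_\varphi^4}{r^2}$ term exactly right is the delicate part; the rest is routine Hölder–Young estimation.
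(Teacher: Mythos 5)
Your plan coincides with the paper's proof: test \eqref{1.6}$_1$ with $v_\varphi^3/r^2$, handle the convective term via $\Div\bm v=0$ and $\bm v\cdot\bar n\vert_S=0$, integrate the dissipative block by parts using $v_\varphi\vert_S=0$ and the axis expansion $v_\varphi\sim a_1 r$ to kill the boundary terms, and close with H\"older--Young (exponents $4$ and $4/3$, giving $27/(4\nu^3)$) on the forcing before integrating in time. The only imprecision is attributing the $\tfrac32\int\frac{v_r}{r}\frac{v_\varphi^4}{r^2}$ term solely to the convective part, whereas it is the sum of $\tfrac12$ from the convective term and $1$ from the explicit $\tfrac{1}{r}v_rv_\varphi$ term in the equation; this does not affect the validity of the argument.
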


\begin{proof}
    Multiply $\eqref{1.6}_1$ by ${v_\varphi^3\over r^2}$ (see expansion \eqref{4.4} of $v_\varphi$ near the axis of symmetry) and integrate over $\Omega$. Then we have
    \begin{multline}\label{4.3}
        \frac{\ud}{4\,\ud t}\int_\Omega{v_\varphi^4\over r^2}\ud x + \int_\Omega \bm v\cdot\nabla v_\varphi \frac{v_\varphi^3}{r^2}\ud x - \nu\int_\Omega\Delta v_\varphi\frac{v_\varphi^3}{r^2}\ud x + \nu\int_\Omega\frac{v_\varphi^4}{r^4}\ud x \\
        + \int_\Omega \frac{v_r}{r}\frac{v_\varphi^4}{r^2}\ud x = \int_\Omega f_\varphi\frac{v_\varphi^3}{r^2}\ud x.
    \end{multline}
    The second term in \eqref{4.3} equals
    \[
        \frac{1}{4}\int_\Omega \bm v\cdot\nabla v_\varphi^4r^{-2}\ud x = \frac{1}{4}\int_\Omega \bm v\cdot\nabla\left(v_\varphi^4r^{-2}\right)\,\ud x + \frac{1}{2}\int_\Omega v_rv_\varphi^4r^{-3}\ud x =\frac{1}{2}\int_\Omega{v_r\over r}{v_\varphi^4\over r^2}\ud x,
    \]
    where we used that $\bm v\cdot\bar n\big\vert_S=0$ and $\Div \bm v = 0$.

    Integrating by parts in the third term on the l.h.s. of \eqref{4.3} yields
    \begin{multline*}
        \int_\Omega\nabla v_\varphi\nabla v_\varphi^3r^{-2}\ud x+\int_\Omega\nabla v_\varphi v_\varphi^3\nabla r^{-2}\ud x
        = 3\int_\Omega v_\varphi^2|\nabla v_\varphi|^2r^{-2}\ud x-2\int_\Omega v_{\varphi,r}v_\varphi^3r^{-3}\ud x\\
        =\frac{3}{4}\int_\Omega|\nabla v_\varphi^2|^2r^{-2}\ud x-\frac{1}{2}\int_\Omega\partial_rv_\varphi^4r^{-2}\, \ud r \ud z\\
        =\frac{3}{4}\int_\Omega\abs{\frac{\nabla v_\varphi^2}{r}}^2\,\ud x-\frac{1}{2}\int_\Omega\partial_r\left(v_\varphi^4r^{-2}\right)\, \ud r\ud z-\int_\Omega v_\varphi^4r^{-3}\, \ud r  \ud z\equiv I.
    \end{multline*}
    The first term in $I$ equals
    \begin{multline*}
        \frac{3}{4}\int_\Omega\abs{\nabla{v_\varphi^2\over r}-v_\varphi^2\nabla\frac{1}{r}}^2\,\ud x=\frac{3}{4}\int_\Omega\abs{\nabla{v_\varphi^2\over r}}^2\,\ud x -\frac{3}{2}\int_\Omega\nabla{v_\varphi^2\over r}\cdot v_\varphi^2\nabla\frac{1}{r}\ud x \\
        + \frac{3}{4}\int_\Omega\abs{v_\varphi^2\nabla\frac{1}{r}}^2\,\ud x =\frac{3}{4}\int_\Omega\abs{\nabla{v_\varphi^2\over r}}^2\,\ud x + \frac{3}{2}\int_\Omega\partial_r{v_\varphi^2\over r}{v_\varphi^2\over r^2}\ud x+\frac{3}{4} \int_\Omega\abs{{v_\varphi\over r}}^4\,\ud x \equiv J.
    \end{multline*}
    The middle term in $J$ can be written in the form
    \[
        \frac{3}{4}\int_\Omega\partial_r{v_\varphi^4\over r^2}\,\ud r  \ud z = \frac{3}{4}\int_{-a}^a{v_\varphi^4\over r^2}\bigg\vert_{r=0}^{r=R}\, \ud z\equiv L.
    \]
    From \cite[Remark 4]{NZ1} it follows that $v_\varphi$ behaves as
    \begin{equation}\label{4.4}
        v_\varphi = a_1(r,z,t)\big\vert_{r = 0}r + a_3(r,z,t)\big\vert_{r = 0} r^3 + o(r^4), \qquad r \approx 0,
    \end{equation}
    for some functions $a_1$ and $a_3$. Since $v_\varphi\big\vert_{r=R}=0$ the second terms in $I$ and $L$ vanish.

    Using the above calculations in \eqref{4.3} yields
    \begin{multline}\label{4.5}
        \frac{1}{4}\Dt\int_\Omega{v_\varphi^4\over r^2}\ud x+\frac{3}{4}\nu\int_\Omega\abs{\nabla{v_\varphi^2\over r}}^2\, \ud x + \frac{3}{4}\nu\int_\Omega{v_\varphi^4\over r^4}\ud x + \frac{3}{2}\int_\Omega{v_r\over r}{v_\varphi^4\over r^2}\ud x \\
        =\int_\Omega f_\varphi{v_\varphi^3\over r^2}\ud x.
    \end{multline}
    Applying the H\"older and Young inequalities to the r.h.s. of \eqref{4.5} and integrating the result with respect to time imply \eqref{4.2}. This concludes the proof.
\end{proof}

\section{Global estimate}\label{s5}

Multiplying \eqref{3.1} by $\frac{\nu^2}{4}$ and adding \eqref{4.2} we obtain
\begin{multline}\label{5.1}
    {\nu^2\over 8}\int_\Omega\omega_1^2(t)\,\ud x + \frac{\nu^3}{8}\int_{\Omega^t}|\nabla\omega_1|^2\,\ud x\ud t'+ \frac{1}{2}\int_\Omega{v_\varphi^4(t)\over r^2}\ud x \\
    + \frac{3\nu}{4}\int_{\Omega^t}\bigg|\nabla{v_\varphi^2\over r}\bigg|^2\,\ud x\ud t' + \frac{\nu}{4}\int_{\Omega^t}\bigg|{v_\varphi\over r}\bigg|^4\, \ud x \,\ud t' \leq \frac{3}{2}\bigg|\int_{\Omega^t}{v_r\over r}{v_\varphi^4\over r^2}\,\ud x\ud t'\bigg| \\
    + c\bigg(\abs{F_1}_{6/5,2,\Omega^t}^2+\abs{\omega_1(0)}_{2,\Omega}^2 +\int_{\Omega^t}r^4f_\varphi^4\,\ud x\ud t'+\int_\Omega{v_\varphi^4(0)\over r^2}\ud x\bigg).
\end{multline}
Therefore, we have to estimate the first term on the r.h.s. of \eqref{5.1}. To examine it we introduce the sets
\begin{equation}\label{5.2}
    \Omega_{r_0}=\{x\in\Omega\colon r\le r_0\}, \qquad \bar\Omega_{r_0}=\{x\in\Omega\colon r\ge r_0\},
\end{equation}
where $r_0>0$ is given.

We write the first term on the r.h.s. of \eqref{5.1} in the form
\begin{equation}\label{5.3}
    \int_{\Omega^t}{v_r\over r}{v_\varphi^4\over r^2}\,\ud x\ud t' = \int_{\Omega_{r_0}^t}{v_r\over r}{v_\varphi^4\over r^2}\,\ud x\ud t' + \int_{\bar\Omega_{r_0}^t}{v_r\over r}{v_\varphi^4\over r^2}\,\ud x\ud t' \equiv I+J.
\end{equation}

\begin{lemma}\label{l5.1}
    Under the assumptions of Lemmas \ref{l2.2} and \ref{l2.4} we have
    \begin{equation}\label{5.4}
        \abs{J}\leq\varepsilon_1\int_{\bar\Omega_{r_0}^t}\abs{\partial_z\frac{v_\varphi^2}{r}}^2\,\ud x\ud t' + \epsilon_2\sup_t\abs{\psi_{,xx}}_{2,\Omega}^2 + c\left(\frac{1}{\epsilon_1},\frac{1}{\epsilon_2}\right){D_1^{10}D_2^8\over r_0^{16}}.
    \end{equation}
\end{lemma}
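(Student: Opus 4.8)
The plan is to exploit the divergence-free structure through the stream function and an integration by parts in $z$, which produces exactly the vertical derivative $\partial_z\frac{v_\varphi^2}{r}$ appearing in \eqref{5.4}, and then to absorb the stream-function factor through an Agmon/Sobolev estimate that isolates $\abs{\psi_{,xx}}_{2,\Omega}$. Throughout I abbreviate $w=\frac{v_\varphi^2}{r}$, so that the integrand in $J$ (see \eqref{5.3}) equals $\frac{v_r}{r}w^2$. From \eqref{1.9} and \eqref{1.15} we have $\frac{v_r}{r}=-\psi_{1,z}$, and I record the a priori bounds that will supply the constants: $\norm{\bm v(t)}_{L_2(\Omega)}\le D_1$ together with $\int_\Omega|\nabla\psi|^2\,\ud x\le\norm{\bm v}_{L_2(\Omega)}^2\le D_1^2$ (from \eqref{1.9} and \eqref{2.1}), the $L_4$ bound \eqref{2.10}, and the pointwise control $|v_\varphi|\le D_2/r$ coming from \eqref{2.7} with $u=rv_\varphi$. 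On $\bar\Omega_{r_0}$ the latter reads $|v_\varphi|\le D_2/r_0$, and every negative power of $r$ may be replaced by the corresponding power of $r_0^{-1}$.

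Next I integrate by parts in $z$. Since $\frac{v_r}{r}=-\psi_{1,z}$,
\[
    J=-\int_{\bar\Omega_{r_0}^t}\psi_{1,z}\,w^2\,\ud x\ud t'=2\int_{\bar\Omega_{r_0}^t}\psi_1\,w\,\partial_z w\,\ud x\ud t',
\]
the boundary contributions at $z=\pm a$ vanishing because $\psi=0$, hence $\psi_1=0$, on $S_2$ by \eqref{1.6}$_4$; the restriction to $\bar\Omega_{r_0}$ creates no term since the integration is only in $z$. Applying the Cauchy--Schwarz inequality to split off $\abs{\partial_z w}_{2,\bar\Omega_{r_0}}$ and then Young's inequality with weight $\varepsilon_1$ gives
\[
    \abs{J}\le\varepsilon_1\int_{\bar\Omega_{r_0}^t}\abs{\partial_z\tfrac{v_\varphi^2}{r}}^2\,\ud x\ud t'+\frac{c}{\varepsilon_1}\int_0^t\int_{\bar\Omega_{r_0}}\psi_1^2\,w^2\,\ud x\ud t'.
\]
The first term is already the first term of \eqref{5.4}, so it remains to treat $K:=\int_0^t\int_{\bar\Omega_{r_0}}\psi_1^2w^2\,\ud x\ud t'$.

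To produce the $\psi_{,xx}$ term I estimate $\psi_1$ in $L_\infty$ and keep $w$ in $L_2$: $\int_{\bar\Omega_{r_0}}\psi_1^2w^2\le\abs{\psi_1}_{\infty,\bar\Omega_{r_0}}^2\abs{w}_{2,\bar\Omega_{r_0}}^2$. On $\bar\Omega_{r_0}$ one has $\abs{\psi_1}_{\infty,\bar\Omega_{r_0}}\le r_0^{-1}\abs{\psi}_{\infty,\Omega}$, and by Agmon's inequality $\abs{\psi}_{\infty,\Omega}\le c\abs{\nabla\psi}_{2,\Omega}^{1/2}\bigl(\abs{\psi_{,xx}}_{2,\Omega}+\abs{\nabla\psi}_{2,\Omega}\bigr)^{1/2}$ together with $\abs{\nabla\psi}_{2,\Omega}\le D_1$ this yields $\abs{\psi_1}_{\infty,\bar\Omega_{r_0}}^2\le c\,r_0^{-2}D_1\bigl(\abs{\psi_{,xx}}_{2,\Omega}+D_1\bigr)$. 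For the $w$-factor I use \eqref{2.10} and $|v_\varphi|\le D_2/r\le D_2/r_0$ to bound $\int_0^t\abs{w}_{2,\bar\Omega_{r_0}}^2\,\ud t'=\int_{\bar\Omega_{r_0}^t}\frac{v_\varphi^4}{r^2}\,\ud x\ud t'\le r_0^{-2}\norm{v_\varphi}_{L_4(\Omega^t)}^4\le r_0^{-2}D_1^2D_2^2$. Pulling $\sup_{t'}\abs{\psi_{,xx}}_{2,\Omega}$ out of the time integral and applying Young's inequality once more, now with weight $\epsilon_2$ against $\sup_t\abs{\psi_{,xx}}_{2,\Omega}^2$, converts the linear dependence on $\abs{\psi_{,xx}}_{2,\Omega}$ into $\epsilon_2\sup_t\abs{\psi_{,xx}}_{2,\Omega}^2$ plus a purely constant remainder; collecting all factors of $D_1$, $D_2$ and $r_0^{-1}$ produces \eqref{5.4}.

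The one genuinely delicate point is the bookkeeping of the exponents in the final constant. The high total degree $D_1^{10}D_2^8r_0^{-16}$ is not produced by any single inequality but accumulates through the two Young steps (whose conjugate exponents raise the collected velocity norms to a high power), the repeated replacement of $r^{-1}$ by $r_0^{-1}$ on $\bar\Omega_{r_0}$, and the use of the $L_4$-swirl bound \eqref{2.10}. Keeping the two dissipative quantities $\partial_z\frac{v_\varphi^2}{r}$ and $\psi_{,xx}$ attached to independently adjustable weights $\varepsilon_1,\epsilon_2$—so that each may later be absorbed into the left-hand side of \eqref{5.1} and into the a priori bound for $\norm{\bm v'}_{L_\infty(0,t;H^1)}$, respectively—while letting only powers of the fixed data $D_1,D_2,r_0$ survive in the constant, is the crux; the precise chain of H\"older and interpolation exponents must be chosen with this separation in mind.
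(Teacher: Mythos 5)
Your argument reproduces the paper's proof essentially step for step: the integration by parts in $z$ (which the paper leaves implicit) to produce $\partial_z\frac{v_\varphi^2}{r}$, Young's inequality with weight $\varepsilon_1$, the reduction of the remainder $\int\psi_1^2 v_\varphi^4 r^{-2}$ to $r_0^{-4}\sup\abs{\psi}^2\cdot\norm{v_\varphi}_{L_4(\Omega^t)}^4\le r_0^{-4}\sup\abs{\psi}^2 D_1^2D_2^2$ via Lemma \ref{l2.4}, an interpolation bound for $\sup\abs{\psi}$ in terms of $\abs{\psi_{,xx}}_{2,\Omega}$, and a second Young step with weight $\epsilon_2$. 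The only substantive deviation is your use of Agmon's inequality $\abs{\psi}_{\infty,\Omega}\le c\abs{\nabla\psi}_{2,\Omega}^{1/2}\left(\abs{\psi_{,xx}}_{2,\Omega}+\abs{\nabla\psi}_{2,\Omega}\right)^{1/2}$ in place of the paper's $\abs{\psi}_{\infty,\Omega}^2\le c\abs{\psi_{,xx}}_{2,\Omega}^{3/2}\abs{\psi}_{2,\Omega}^{1/2}$; both are legitimate, but the resulting Young step has conjugate exponent $2$ rather than $4$, so the exponents you declined to track in your final paragraph would actually come out as $D_1^6D_2^4/r_0^{8}$ (plus a lower-order term) rather than the stated $D_1^{10}D_2^8/r_0^{16}$ --- a formally stronger bound, though not literally the constant appearing in \eqref{5.4} and in the definition of $M(t)$.
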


\begin{proof}
    Since ${v_r\over r}=-\psi_{1,z}$ we have
    \begin{multline*}
        |J| = \bigg|\int_{\bar\Omega_{r_0}^t}\psi_{1,z}{v_\varphi^4\over r^2}\,\ud x\ud t'\bigg| \\
        \leq\varepsilon_1\int_{\bar\Omega_{r_0}^t}\bigg|\partial_z{v_\varphi^2\over r}\bigg|^2\,\ud x\ud t'+c\left(\frac{1}{\epsilon_1}\right)\int_{\bar\Omega_{r_0}^t}\psi_1^2{v_\varphi^4\over r^2}\,\ud x\ud t'\equiv J_1.
    \end{multline*}
    In view of Lemma \ref{l2.4} the second term in $J_1$ is bounded by
    \[
    \frac{1}{r_0^4}\int_{\bar\Omega_{r_0}^t}\psi^2v_\varphi^4\,\ud x\ud t'\le{D_1^2D_2^2\over r_0^4}\sup_{\bar\Omega_{r_0}^t}\psi^2\le c{D_1^2D_2^2\over r_0^4}\sup_t\abs{\psi_{,xx}}_{2,\Omega}^{\frac{3}{2}}\abs{\psi}_{2,\Omega}^{\frac{1}{2}}\equiv J_2.
    \]
    Note that all consideration are either \emph{a priori} or performed for regular, local solutions. Then, derivation of regular, global solutions can be achieved by extension with respect to time. Since $\psi$ is a solution to the problem
    \[
        \left\{
            \begin{aligned}
                &-\Delta\psi+{\psi\over r^2}=\omega & &\text{in $\Omega^T$}, \\
                &\psi = 0 & &\text{ on $S^T$},
            \end{aligned}
        \right.
    \]
    we have
    \[
        \int_\Omega|\nabla\psi|^2\, \ud x + \int_\Omega{\psi^2\over r^2}\, \ud x \leq \int_\Omega \bm v'^2\, \ud x \leq cD_1^2.
    \]
    Then $J_2$ is bounded by
    \[
        J_2\le\varepsilon_2\sup_t\abs{\psi_{,xx}}_{2,\Omega}^2 + c\left(\frac{1}{\epsilon_2}\right) \frac{D_1^8D_2^8}{r_0^{16}}D_1^2.
    \]
    Using estimates for $J_1$ and $J_2$ we derive \eqref{5.4}. This ends the proof.
\end{proof}

\begin{lemma}\label{l5.2}
    Let the assumptions of Lemma \ref{l2.2} hold. Additionally, assume that $v_\varphi(0)\in L_4(\Omega)$, $u\in L_\infty(\Omega^t)$, $\abs{u}_{\infty,\Omega^t}\le D_2$. Then $I$ from \eqref{5.3} satisfies
    \begin{multline}\label{5.5}
        \abs{I} \leq\epsilon_3\abs{\partial_z\frac{v_\varphi^2}{r}}_{2,\Omega_{r_0}^t}^2 + c\left(\frac{1}{\epsilon_3}\right) \abs{u}_{\infty,\Omega_{r_0}^t}^2\left(D_2^2\abs{\nabla\omega_1}_{2,\Omega^t}^2 \vphantom{ \left(\abs{u_1(0)}^2 + \abs{f_1}_{3,1,\Omega^t}^2\right) }\right. \\
        + \left.\abs{\omega_1}_{2,\infty,\Omega^t}^4D_1^2 + \left(\abs{u_1(0)}_{3,\Omega_{r_0}}^2 + \abs{f_1}_{3,1,\Omega^t}^2\right) \abs{\omega_1}_{2,\Omega^t}^2\exp \left(c\abs{\omega_1}_{2,\Omega^t}^2\right)\right).
    \end{multline}
\end{lemma}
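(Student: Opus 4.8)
The plan is to push the whole integral onto the stream function, exploiting $\frac{v_r}{r}=-\psi_{1,z}$ from \eqref{1.9} and \eqref{1.15}, so that
\[
    I = -\int_{\Omega_{r_0}^t}\psi_{1,z}\left(\frac{v_\varphi^2}{r}\right)^2\,\ud x\ud t'.
\]
First I would integrate by parts in $z$. Since $v_\varphi\big\vert_{z=\pm a}=0$ by \eqref{1.10}, the boundary terms vanish and
\[
    I = 2\int_{\Omega_{r_0}^t}\psi_1\,\frac{v_\varphi^2}{r}\,\partial_z\frac{v_\varphi^2}{r}\,\ud x\ud t'.
\]
An application of the Young inequality then splits off the absorbable term $\epsilon_3\abs{\partial_z\frac{v_\varphi^2}{r}}_{2,\Omega_{r_0}^t}^2$ and leaves the core quantity $\frac{c}{\epsilon_3}\int_{\Omega_{r_0}^t}\psi_1^2\frac{v_\varphi^4}{r^2}\,\ud x\ud t'$, where all the remaining work concentrates.

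The second step is to bring in the swirl. Writing $\frac{v_\varphi^4}{r^2}=\frac{u^2}{r^2}u_1^2$ with $u=rv_\varphi$ and $u_1=\frac{v_\varphi}{r}$ (recall \eqref{1.13} and \eqref{1.15}) and estimating $u^2\le\abs{u}_{\infty,\Omega_{r_0}^t}^2$ pointwise produces the advertised small factor in front. What is left is $\int_{\Omega_{r_0}^t}\psi_1^2\frac{u_1^2}{r^2}\,\ud x\ud t'$, which I would split by the H\"older inequality so that each of the three summands of \eqref{5.5} comes from a different a priori bound. The control of $u_1$ in $L_3$ provided by Lemma \ref{lem2.9n} with $s=3$ yields precisely the factor $(\abs{u_1(0)}_{3,\Omega_{r_0}}^2+\abs{f_1}_{3,1,\Omega^t}^2)\exp(c\abs{\omega_1}_{2,\Omega^t}^2)$; the bounds $\abs{\psi_1}_{\infty,\Omega}\le c\abs{\omega_1}_{2,\Omega}$ and $\norm{\psi_1}_{2,\Omega}\le c\abs{\omega_1}_{2,\Omega}$ from Lemma \ref{lem2.11} handle $\psi_1$; and a parabolic interpolation of $\omega_1$ between $L_\infty(0,t;L_2)$ and $L_2(0,t;H^1)$ together with the energy estimate \eqref{2.1} and $\abs{u}_{\infty,\Omega^t}\le D_2$ (Lemma \ref{l2.3}) supplies the two remaining summands $D_2^2\abs{\nabla\omega_1}_{2,\Omega^t}^2$ and $\abs{\omega_1}_{2,\infty,\Omega^t}^4 D_1^2$.

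The hard part is the behaviour at the axis $r=0$. By Lemma \ref{lem2.11} and the expansion \eqref{4.4}, both $\psi_1$ and $u_1$ are only $O(1)$ as $r\to0^+$, so the weight $\frac{1}{r^2}$ left after extracting $\abs{u}_{\infty,\Omega_{r_0}^t}^2$ makes a naive estimate of $\int_{\Omega_{r_0}^t}\psi_1^2\frac{u_1^2}{r^2}\,\ud x\ud t'$ logarithmically divergent. This is exactly where the weighted stream-function estimates must enter: I would subtract the non-decaying part $\psi_1(0)=\psi_1\big\vert_{r=0}$ (and the correction $\chi$ of Corollary \ref{cor1}) from $\psi_1$, control the $z$-profile through $\int_{-a}^a\psi_1^2(0)\,\ud z\le c\abs{\omega_1}_{2,\Omega}^2$ from \eqref{2.11}, and estimate the genuinely decaying remainder $\psi_1-\psi_1(0)-\chi=O(r^2)$ in the weighted $H_0^2$/$H_0^3$ norms of Corollaries \ref{cor1} and \ref{cor2}. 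Matching the H\"older exponents to the available norms of $u_1$, $\omega_1$ and $\psi_1$ so that the singular weight $\frac{1}{r^2}$ is distributed without divergence, while the small factor $\abs{u}_{\infty,\Omega_{r_0}^t}^2$ is cleanly isolated, is the delicate bookkeeping that drives the whole estimate and that I expect to be the principal obstacle.
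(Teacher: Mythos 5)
Your opening move coincides with the paper's: write $\frac{v_r}{r}=-\psi_{1,z}$, integrate by parts in $z$, and apply the Young inequality to reduce everything to $c(1/\epsilon_3)\int_{\Omega_{r_0}^t}\psi_1^2\frac{v_\varphi^4}{r^2}\,\ud x\ud t'$, and your list of tools (the weighted corollaries, Lemma \ref{lem2.11}, Lemma \ref{lem2.9n} with $s=3$) is the right one. The gap is in the step where you factor $\frac{v_\varphi^4}{r^2}=\frac{u^2}{r^2}u_1^2$ and pull out $\abs{u}_{\infty,\Omega_{r_0}^t}^2$ \emph{uniformly}, leaving $\int_{\Omega_{r_0}^t}\psi_1^2\frac{u_1^2}{r^2}\,\ud x\ud t'$. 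You correctly observe that this integral is logarithmically divergent near the axis (since $\psi_1$ and $u_1$ are $O(1)$ there and $\frac{1}{r^2}\,\ud x=\frac{\ud r\,\ud z}{r}$), and you propose to cure it by subtracting $\psi_1(0)+\chi$ from $\psi_1$. But the subtraction only helps the remainder $\psi_1-\psi_1(0)-\chi$; the subtracted piece $\psi_1(0)=\psi_1\vert_{r=0}$ must still be integrated against $\frac{u_1^2}{r^2}$, and since $\psi_1(0)$ depends only on $z$ while $u_1$ does not vanish at $r=0$, the radial integral $\int_0^{r_0}u_1^2\,\frac{\ud r}{r}$ still diverges. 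The bound $\int_{-a}^a\psi_1^2(0)\,\ud z\le c\abs{\omega_1}_{2,\Omega}^2$ from \eqref{2.11} controls the $z$-profile but cannot repair this.

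The paper's way out is to extract a \emph{different} power of $\abs{u}_\infty$ for each piece of the decomposition $\psi_1=(\psi_1-\psi_1(0)-\eta)+\eta+\psi_1(0)$, chosen so that the leftover weight matches an available estimate: for the first piece it takes $\abs{u}_\infty^4$ and uses the $r^{-6}$-weighted bound of Corollary \ref{cor2} (note: the $\eta$/$H_0^3$ version, not the $\chi$/$H_0^2$ version of Corollary \ref{cor1} that you invoke, whose $r^{-4}$ weight is too weak to absorb $\frac{v_\varphi^4}{r^2}$ against only $\abs{u}_\infty$-norms); for the $\eta$-piece it takes $\abs{u}_\infty^2$ and uses the Hardy inequality together with Lemma \ref{lem2.9n}; and for the $\psi_1(0)$-piece it takes only \emph{one} power of $\abs{u}_\infty$, writing $\frac{v_\varphi^4}{r^2}=\abs{u}\,\abs{u_1}^3$ so that no singular weight remains, then estimates $\int\abs{u_1}^3\le\abs{u_1}_{4}^2\abs{u_1}_{2}$ and applies Young, producing an $\varepsilon\abs{u_1}_{4,\Omega_{r_0}^t}^4$ term that is absorbed by the dissipative term $\frac{\nu}{4}\int_{\Omega^t}\abs{v_\varphi/r}^4\,\ud x\,\ud t'$ on the left of \eqref{5.1}. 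This last device --- trading one power of $\abs{u}_\infty$ for integrability plus an absorbable $L_4$ term --- is the idea missing from your proposal, and your fixed factorization $\frac{u^2}{r^2}u_1^2$ forecloses it; without it the $\psi_1(0)$ contribution cannot be closed.
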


\begin{proof}
    We have
    $$
    |I|\le\varepsilon_3\int_{\Omega_{r_0}^t}\bigg|\partial_z{v_\varphi^2\over r}\bigg|^2\,\ud x\ud t'+c(1/\varepsilon_3)\int_{\Omega_{r_0}^t}\psi_1^2{v_\varphi^4\over r^2}\,\ud x\ud t'\equiv I_1+I_2.
    $$
    We estimate $I_2$ by
    \begin{multline*}
        I_2 \le\int_{\Omega_{r_0}^t}|\psi_1-\psi_1(0) - \eta|^2{v_\varphi^4\over r^2}\,\ud x\ud t'+ \int_{\Omega_{r_0}^t}|\eta|^2{v_\varphi^4\over r^2}\,\ud x\ud t' +\int_{\Omega_{r_0}^t}|\psi_1(0)|^2{v_\varphi^4\over r^2}\,\ud x\ud t'\\
        \equiv I_2^1+I_2^2+I_2^3,
    \end{multline*}
    where $\psi_1(0)=\psi_1\big\vert_{r=0}$ and $\eta$ is defined in Corollary \ref{cor2}. Using this Corollary we have
    \begin{multline*}
        I_2^1 =\int_{\Omega_{r_0}^t}{|\psi_1-\psi_1(0) - \eta|^2\over r^6}{r^6v_\varphi^4\over r^2}\,\ud x\ud t' \\
        \le c\sup_{\Omega_{r_0}^t}\abs{u}^4\int_{\Omega_{r_0}^t}{|\psi_1-\psi_1(0) - \eta|^2\over r^6}\,\ud x\ud t'\le c\sup_{\Omega_{r_0}^t}|u|^4\abs{\nabla\omega_1}_{2,\Omega^t}^2.
    \end{multline*}
    Consider $I_2^3$,
    \begin{multline*}
        I_2^3 \le\sup_{\Omega_{r_0}}|\psi_1(0)|^2\sup_{\Omega_{r_0}^t}|u| \int_{\Omega_{r_0}^t}\bigg|{v_\varphi\over r}\bigg|^3\,\ud x\ud t'\\
        =\sup_{\Omega_{r_0}}|\psi_1(0)|^2\sup_{\Omega_{r_0}^t}|u|\int_{\Omega_{r_0}^t} {v_\varphi^2\over r^2}\bigg|{v_\varphi\over r}\bigg|\,\ud x\ud t' \\
        \le\sup_{\Omega_{r_0}^t}\abs{\psi_1}^2\sup_{\Omega_{r_0}^t}|u|\abs{\frac{v_\varphi}{r}}_{4,\Omega_{r_0}^t}^2\abs{{v_\varphi\over r}}_{2,\Omega_{r_0}^t} \\
        \leq \varepsilon\abs{{v_\varphi\over r}}_{4,\Omega_{r_0}^t}^4 + c\left(\frac{1}{\epsilon}\right)\sup_t\abs{\omega_1}_{2,\Omega}^4 \sup_{\Omega_{r_0}^t}|u|^2D_1^2,
    \end{multline*}
    where we used Lemmas \ref{l2.2} and \ref{lem2.11}.

    Consider $I_2^2$. To simplify presentation we express $\eta$ in the short form
    $$
        \eta = \int_0^r(r-\tau)f(\tau)d\tau,
    $$
    where $f$ replaces $\left(\frac{3}{r}\psi_{1,r} + \psi_{1,zz} + \omega_1\right)\left(1 + K(r)\right)$.

    Then
    \begin{multline*}
        I_2^2 =\int_{\Omega_{r_0}^t}\bigg|\int_0^r(r-\tau)f(\tau)d\tau\bigg|^2 {v_\varphi^4\over r^2}\,\ud x\ud t' =\int_{\Omega_{r_0}^t}\bigg|\frac{1}{r}\int_0^r(r-\tau)f(\tau)d\tau\bigg|^2r^2v_\varphi^2{v_\varphi^2\over r^2}\,\ud x\ud t' \\
        \le\sup_{\Omega_{r_0}^t}|u|^2\int_{\Omega_{r_0}^t}\bigg|\frac{1}{r}\int_0^t(r-\tau)f(\tau)d\tau\bigg|^2{v_\varphi^2\over r^2}\,\ud x\ud t'\equiv L_1.
    \end{multline*}
    Using the H\"older inequality in $L_1$ implies
    \[
        L_1\leq \abs{u}_{\infty,\Omega_{r_0}^t}^2\int_0^t\left(\int_{\Omega_{r_0}}\abs{\frac{1}{r}\int_0^r(r-\tau)f(\tau)\, \ud\tau}^{2p}\ud x\right)^{2/2p}\, \ud t \sup_t \abs{u_1}_{2p',\Omega_{r_0}^T}^2\equiv L_2,
    \]
    where $1/p+1/p'=1$.

    Applying the Hardy inequality for the middle term in $L_2$, gives
    \begin{multline*}
        \int_0^t\left(\int_{\Omega_{r_0}}\bigg|\frac{1}{r}\int_0^r(r-\tau)f(\tau)d\tau\bigg|^{2p}\ud x\right)^{\frac{2}{2p}}\, \ud t' \\
        \le c\int_0^t\left(\int_{\Omega_{r_0}}\bigg|\int_0^rf(\tau)d\tau \bigg|^{2p}\ud x\right)^{\frac{2}{2p}}\, \ud t' \\
        \le c\int_0^t\left(\int_{\Omega_0}\bigg|\int_0^r(\psi_{1,\tau\tau}+ \psi_{1,\tau\tau}K(\tau))\, \ud\tau\bigg|^{2p}\ud x\right)^{\frac{2}{2p}}\, \ud t'\equiv L_3,
    \end{multline*}
    where we used that
    $$
    f=-\psi_{1,rr}\left(1 + K(r)\right).
    $$
    To apply the Hardy inequality we use the formula
    \[
        \int_0^r (r - \tau)f(\tau)\, \ud \tau = \int_0^r \int_0^\sigma f(\tau)\, \ud \tau\, \ud \sigma.
    \]
    Then, we use the following Hardy inequality (see e.g. \cite[Ch. 1, Sec. 2.16]{BIN})
    \[
        \left(\int_0^{r_0}\abs{\frac{1}{r}\int_0^r\int_0^\sigma f(\tau)\, \ud \tau\, \ud \sigma}^{2p}\, r\ud r\right)^{\frac{1}{2p}} \leq c \left(\int_0^{r_0}\abs{\int_0^r f(\tau)\, \ud \tau}^{2p}\, r\ud r\right)^{\frac{1}{2p}}
    \]
    Integrating the result with respect to $z$ we derive the first inequality in $L_3$.
    Continuing,
    $$
        L_3\le c\int_0^t\bigg(\int_{\Omega_{r_0}}\bigg(|\psi_{1,r}|^{2p}+ \bigg|\int_0^r\psi_{1,\tau\tau}K(\tau)d\tau\bigg|^{2p}\bigg)\,\ud x\bigg)^{2/2p}\,\ud t'\equiv L_4.
    $$
    Using
    $$
    \int_0^r\psi_{1,\tau\tau}K(\tau)d\tau=\psi_{1,r}K(r)-\int_0^r\psi_{1,\tau}K_{,\tau}d\tau
    $$
    in $L_4$ implies
    \begin{multline*}
        L_4 \le c\int_0^t\norm{\psi_{1,r}}_{2p,\Omega_{r_0}}^2\, \ud t' \\
        + \int_0^t\! \left(\int_{\Omega_{r_0}}\!\left(|\psi_{1,r}K(r)|^{2p} + \abs{\int_0^r\psi_{1,\tau} K_{,\tau}d\tau}^{2p}\right)\,\ud x\right)^{\frac{1}{p}} \\
        \leq c\int_0^t\norm{\psi_{1,r}}_{2p,\Omega_{r_0}}^2\, \ud t'\equiv L_5,
    \end{multline*}
    where the properties of $K$ are used. Finally, for $p\le 3$ and Lemma \ref{lem2.11}
    $$
    L_5\le c\abs{\omega_1}_{2,\Omega_{r_0}^t}^2
    $$
    Summarizing
    \begin{multline*}
        I_2^2\le c\abs{u}_{\infty,\Omega_{r_0}^t}^2\left(\abs{u_1(0)}_{2p',\Omega_{r_0}}^2 + \abs{f_1}_{2p',1,\Omega_{r_0}^t}^2\right)\\
        \cdot \exp\left(c\int_0^t\abs{\omega_1(t')}_{2,\Omega_{r_0}}^2 \,\ud t'\right)\abs{\omega_1}_{2,\Omega_{r_0}^t}^2,
    \end{multline*}
    where $p'\geq \frac{3}{2}$ and Lemma \ref{lem2.11} was used.

    Using estimates of $I_2^1$, $I_2^2$, $I_2^3$, we obtain
    \begin{multline*}
        I_2 \leq c\abs{u}_{\infty,\Omega_{r_0}^t}^2 \left(\abs{u}_{\infty,\Omega_{r_0}^t}^2 \abs{\nabla\omega_1}_{2,\Omega^t}^2 + \abs{\omega_1}_{2,\infty,\Omega^t}^4D_1^2 \vphantom{\left(\abs{u_1(0)}_{3,\Omega_{r_0}}^2 + \abs{f_1}_{3,1,\Omega_{r_0}^t}^2\right)}%
        \right.\\
        + \left.\left(\abs{u_1(0)}_{3,\Omega_{r_0}}^2 + \abs{f_1}_{3,1,\Omega_{r_0}^t}^2\right)\abs{\omega_1}_{2,\Omega^t}^2\exp\left(c\abs{\omega_1}_{2,\Omega^t}^2\right)\right).
    \end{multline*}
    Exploiting the estimate in the bound of $I$ we obtain \eqref{5.5}. This concludes the proof.
\end{proof}

\begin{proof}[Proof of Theorem \ref{t1}]
    Using \eqref{5.3} and estimates \eqref{5.4} and \eqref{5.5} in \eqref{5.1} and assuming that $\varepsilon_1$ and $\varepsilon_3$ are sufficiently small we obtain the inequality
\begin{multline}\label{5.11}
    \abs{\omega_1}_{2,\infty,\Omega^t}^2+\norm{\omega_1}_{L_2(0,t;H^1(\Omega)}^2\leq c\abs{u}_{\infty,\Omega_{r_0}^t}^2 \left(D_2^2\abs{\nabla\omega_1}_{2,\Omega^t}^2 + D_1^2\abs{\omega_1}_{2,\infty,\Omega^t}^4 \vphantom{\exp \left(c\abs{\omega_1}_{2,\Omega^t}^2\right)}%
    \right. \\
     + \left.\left(\abs{u_1(0)}_{3,\Omega_{r_0}}^2 + \abs{f_1}_{3,1,\Omega_{r_0}^t}^2\right)\abs{\omega_1}_{2,\Omega^t}^2\exp \left(c\abs{\omega_1}_{2,\Omega^t}^2\right)\right) + M(t),
\end{multline}
    where $M(t)$ is introduced in \eqref{1.21}.

    Let
    \begin{equation}
        X(t)=\abs{\omega_1}_{2,\infty,\Omega^t}^2+\norm{\omega_1}_{L_2(0,t;H^1(\Omega))}^2.
        \label{5.12}
    \end{equation}
    In view of this notation, \eqref{5.11} takes the form
    \begin{multline}
        X(t) \le c\abs{u}_{\infty,\Omega_{r_0}^t}^2\left(D_2^2X + D_1^2X^2 \vphantom{\left(\abs{u_1(0)}_{3,\Omega}^2 + \abs{f_1}_{3,1,\Omega_{r_0}^t}^2\right)}
    \right.\\
        + \left. X^2\exp\left(cX^2\right)\left(\abs{u_1(0)}_{3,\Omega}^2 + \abs{f_1}_{3,1,\Omega_{r_0}^t}^2\right)\right) + M(t) \equiv \epsilon F(X(t)) + M(t).
        \label{5.13}
    \end{multline}

    Consider the equality
    \begin{equation}\label{eq5.90}
        X'(t) = \epsilon F\left(X'(t)\right) + M(t).
    \end{equation}
    Using the method of successive approximations we will show that there exists a solution $X'(t)$ and determine the magnitude of $\epsilon$ which ensures the existence of this solutions.

    Suppose that
    \begin{equation}\label{eq5.100}
        X'_{n+1}(t) = \epsilon F\left(X_n(t)\right) + M(t).
    \end{equation}
    Let $\gamma > 1$. Recall that $M = M(T)$ and assume that
    \begin{equation}\label{eq5.110}
        \abs{X'_n(t)} \leq \gamma M.
    \end{equation}
    Then from \eqref{eq5.100} and \eqref{5.13} we obtain
    \begin{multline}
        \abs{X'_{n+1}(t)} \leq c \abs{u}_{\infty,\Omega^t} \left(D_2^2(\gamma M) + D_1^2(\gamma M)^2 \vphantom{\left(\abs{u_1(0)}^2_{3,\Omega} + \abs{f_1}^2_{3,1,\Omega_{r_0}^t}\right)}\right.\\%
        \left. + (\gamma M)^2 \exp\left(c(\gamma M)^2\right) \left(\abs{u_1(0)}^2_{3,\Omega} + \abs{f_1}^2_{3,1,\Omega_{r_0}^t}\right)\right) + M.
    \end{multline}
    Assume that
    \begin{multline*}
        \abs{u}_{\infty,\Omega^t)} \leq c(\gamma - 1)M \\
        \cdot \left(\gamma M D_2^2 + D_1^2 (\gamma M)^2 + (\gamma M)^2\exp\left(c(\gamma M)^2\right)\left(\abs{u_1(0)}^2_{3,\Omega} + \abs{f_1}^2_{3,1,\Omega_{r_0}^t}\right)\right)^{-1}.
    \end{multline*}
    Then
    \begin{equation}\label{eq5.130}
        \abs{X'_{n+1}(t)} \leq \gamma M.
    \end{equation}
    Let now $\omega_1(0)$ be given. Let $\tilde \omega_1$ be an extension of $\omega_1(0)$ such that $\abs{\tilde \omega_1}_{2,\infty(\Omega^t)}^2 + \norm{\tilde \omega_1}_{1,2,\Omega^t}^2 < \infty$ and $\tilde \omega_1\big\vert_{t = 0} = \omega_1(0)$.
    Let
    \begin{equation}\label{eq5.140}
        X_0' = \abs{\tilde \omega_1}_{2,\infty(\Omega^t)}^2 + \norm{\tilde \omega_1}_{1,2,\Omega^t}^2 < \gamma M
    \end{equation}
    Then, \eqref{eq5.110}, \eqref{eq5.130} and \eqref{eq5.140} imply that
    \[
        \abs{X'_n} \leq \gamma M \qquad \text{for all $n \in \mathbb{N}_0$}.
    \]

    It remains to check the convergence of $X_n'$. Let
    \[
        Y_n' = X_n' - X_{n-1}'.
    \]
    Then, \eqref{eq5.100} implies
    \begin{multline}
        Y_{n+1}' = c \abs{u}_{\infty,\Omega^t} \left(D_2^2 Y_n' + D_1^2\left(X_n'^2 - X_{n-1}'^2\right)^2 \vphantom{\left(\abs{u_1(0)}^2_{3,\Omega} + \abs{f_1}^2_{3,1,\Omega_{r_0}^t}\right)}\right.\\%
        \left. + \left(X_n'^2\exp\left(cX_n'^2\right) - X_{n+1}'^2 \exp\left(cX_{n-1}'^2\right)\right)\left(\abs{u_1(0)}^2_{3,\Omega} + \abs{f_1}^2_{3,1,\Omega_{r_0}^t}\right)\right).
    \end{multline}
    Continuing, we have
    \begin{multline*}
        \abs{Y_{n+1}'} \leq c \abs{u}_{\infty,\Omega^t} \left(D_2^2 \abs{Y_n'} + D_1^2\abs{Y_n'}\left(\abs{X_n'} + \abs{X_{n-1}'}\right) \vphantom{\left(\abs{u_1(0)}^2_{3,\Omega} + \abs{f_1}^2_{3,1,\Omega_{r_0}^t}\right)} + \left(\left(X_n'^2 - X_{n-1}'^2\right) \exp\left(cX_n'^2\right)\right.\right.\\%
        \left. + \left(X_n'^2 - X_{n-1}'^2\right) \exp\left(cX_{n-1}'^2\right) + X_{n-1}'^2\left(\exp\left(cX_n'^2\right) - \exp\left(cX_{n-1}'^2\right)\right)\right) \\
        \cdot \left.\left(\abs{u_1(0)}^2_{3,\Omega} + \abs{f_1}^2_{3,1,\Omega_{r_0}^t}\right)\right) \\
        \leq c \abs{u}_{\infty,\Omega^t} \left(D_2^2 \abs{Y_n'} + 2\gamma M D_1^2\abs{Y_n'} \vphantom{\left(\abs{u_1(0)}^2_{3,\Omega} + \abs{f_1}^2_{3,1,\Omega_{r_0}^t}\right)} + \left(\abs{Y_n'} 2\gamma M \exp\left(c\left(\gamma M\right)^2\right)\right.\right.\\%
        + \left.\left. \left(\gamma M\right)^2 \exp\left(c\left(\gamma M\right)^2\right) \abs{Y_n'} 2\gamma M\right) \left(\abs{u_1(0)}^2_{3,\Omega} + \abs{f_1}^2_{3,1,\Omega_{r_0}^t}\right)\right) \\
        = c \abs{u}_{\infty,\Omega^t} \left(D_2^2 + D_1^22\gamma M + 2\gamma M \left(\exp\left(c\left(\gamma M\right)^2\right)\right) + 2(\gamma M)^3\exp\left(c(\gamma M)^2\right)\right) \\
        \cdot \left(\abs{u_1(0)}^2_{3,\Omega} + \abs{f_1}^2_{3,1,\Omega_{r_0}^t}\right) \abs{Y_n'}
    \end{multline*}
    Hence, the sequence converges if
    \begin{multline*}
        \abs{u}_{\infty,\Omega^t} \left(D_2^2 + D_1^2(2\gamma M) + \left(2\gamma M \exp(c(\gamma M)^2)\right.\right. \\
        + \left.\left. 2(\gamma M)^3 \exp(c(\gamma M)^2)\right)\left(\abs{u_1(0)}^2_{3,\Omega} + \abs{f_1}^2_{3,1,\Omega_{r_0}^t}\right)\right) < 1.
    \end{multline*}
This ends the proof.
\end{proof}
As explained after Theorem \ref{t1} we have to emphasize that \eqref{1.25} is crucial for deducing the regularity of weak solutions to problem \eqref{1.5}.

\bibliographystyle{elsarticle-num}
\bibliography{bibliography}

\end{document}